\numberwithin{equation}{section}
\newtheorem{defn}{Definition}[section]
\newtheorem{theorem}{Theorem}[section]
\newtheorem{example}[theorem]{Example}
\newtheorem{lemma}[theorem]{Lemma}
\newtheorem{prop}[theorem]{Proposition}
\newtheorem{remark}[theorem]{Remark}
\newtheorem{qus}[theorem]{Question}
\theoremstyle{definition}
\newtheorem{conn}{Connected sum}
\def \begineq{\begin{equation}}
\def \endeq{\end{equation}}
\def \bb{\mathbb}
\def \CC{{\bb{C}}}
\def \CP{{\bb{CP}}}
\def \RR{{\bb{R}}}
\def \TT{{\bb{T}}}
\def \ZZ{{\bb{Z}}}
\def \({\left(}
\def \){\right)}
\def \<{\langle}
\def \>{\rangle}
\def \bar{\overline}
\begin{document}

\title{$\TT^2$-cobordism of Quasitoric $4$-Manifolds}

\author[S. Sarkar]{Soumen Sarkar}

\address{Theoretical Statistics and Mathematics Unit, Indian
Statistical Institute, 203 B. T. Road, Kolkata 700108, India}

\email{soumensarkar20@gmail.com}

\subjclass[2000]{55N22, 57R90}

\keywords{torus action, quasitoric manifold, cobordism group}

\abstract We show the $\TT^2$-cobordism group of the category of $4$-dimensional quasitoric manifolds
is generated by the $\TT^2$-cobordism classes of $\CP^2$. We construct nice oriented $\TT^2$ manifolds
with boundary where the boundary is the Hirzebruch surfaces. The main tool is the theory of quasitoric
manifolds. 
\endabstract

\maketitle

\section{Introduction}\label{intro}
Cobordism was explicitly introduced by Lev Pontryagin in geometric work on manifolds. In the early 1950's
Ren\'{e} Thom \cite{Tho} showed that cobordism groups could be computed by results of homotopy theory
using the Thom complex construction.
The non-oriented, oriented and complex cobordism cobordism rings are completely determined. Since the
Thom transversality theorem does not hold in equivariant category, the results (like non-equivariant case)
can not be reduced to homotopy theory. The equivariant cobordism has many development, but the equivariant
cobordism ring is not determined for any group. We consider the following category: the objects are all
quasitoric manifolds and morphisms are torus equivariant maps between quasitoric manifolds. Here by torus
we mean compact torus $\TT^n := U(1)^n = (\ZZ^n \otimes \RR)/ \ZZ^n$ of dimension $n$.
We compute the $\TT^2$-cobordism group of $4$-dimensional manifolds in this category.
We show that the $\TT^2$-cobordism group of the category of $4$-dimensional quasitoric manifolds
is generated by the $\TT^2$-cobordism classes of $\CP^2$. The main tool is the theory of quasitoric manifolds.

Quasitoric manifolds and small covers were introduced by Davis and Januskiewicz in \cite{DJ}.
A manifold with quasitoric (small cover) boundary is a manifold with boundary where the boundary
is a disjoint union of some quasitoric manifolds (respectively small covers).

Following \cite{OR} we discuss the definition of quasitoric manifolds and the classification of $4$-dimensional
quasitoric manifolds in Section \ref{defegm}. This classification is needed to prove the Lemma \ref{coborlm}.
In Section \ref{poly} we introduce $edge$-$simple ~ polytopes$ and study their properties. 
We give the brief definition of some manifolds with quasitoric and small cover boundary in a constructive way in Section \ref{def}.
There is a natural torus action on these manifolds with quasitoric boundary having a simple convex polytope as the orbit space.
The fixed point set of the torus action on the manifold with quasitoric boundary
corresponds to the disjoint union of closed intervals of positive length.
Interestingly, we show that such a manifold with quasitoric boundary could be viewed as the quotient space of a quasitoric manifold
corresponding to a certain circle action on it. This is done in the subsection \ref{quot}.

In Section \ref{ori} we show these manifolds with quasitoric boundary are orientable and compute
their Euler characteristic. In Section \ref{cobort2} we show that the $\TT^2$-cobordism group of $4$-dimensional quasitoric manifolds
is generated by the $\TT^2$-cobordism classes of the complex projective space $\CP^2$, see Lemma \ref{coborlm}.
We construct nice oriented $\TT^2$ manifolds with boundary where the boundary is the Hirzebruch surfaces. In particular,
$\TT^2$-cobordism class of a Hirzebruch surface is trivial, see Lemma \ref{hizb}. In Theorem \ref{coborthm}
we compute a set of generators of the $\TT^2$-cobordism group of $4$-dimensional quasitoric manifolds.

\section{Quasitoric manifolds}\label{defegm}
An $n$-dimensional simple polytope in $\RR^n$ is a convex polytope where exactly $n$ bounding hyperplanes meet at 
each vertex. The codimension one faces of a convex polytope are called $facets$.
Let $\mathcal{F}(P)$ be the set of facets of an $n$-dimensional simple polytope $P$.
Following \cite{BP} we give definition of quasitoric manifold, characteristic function and classification.
\begin{defn}
A smooth action of $\TT^n$ on a $2n$-dimensional smooth manifold $M$ is said to be locally
standard if every point $y \in M $ has a $\TT^n$-stable open neighborhood
$U_y$ and a diffeomorphism $\psi : U_y \to V$, where $V$ is a $\TT^n$-stable
open subset of $\CC^n$, and an isomorphism $\delta_y : \TT^n \to \TT^n$ such that
\ $\psi (t\cdot x) = \delta_y (t) \cdot \psi(x)$ for all $(t,x) \in \TT^n \times U_y$.
\end{defn}

\begin{defn}\label{qtd02}
A closed smooth $2n$-dimensional $\TT^n$-manifold $M$ is called a quasitoric manifold over $P$ if
the following conditions are satisfied:
\begin{enumerate}
\item the $\TT^n$ action is locally standard,
\item there is a projection map $\mathfrak{q}: M \to P$ constant on $\TT^n$ orbits which maps every
$l$-dimensional orbit to a point in the interior of a codimension-$l$ face of $P$.
\end{enumerate}
\end{defn}

All complex projective spaces $\CP^{n}$ and their equivariant connected sums, products are quasitoric manifolds.
\begin{lemma}[\cite{DJ}, Lemma 1.4]\label{clema}
Let $\mathfrak{q}: M \to P$ be a $2n$-dimensional quasitoric manifold over $P$. There is a projection map
$f: \TT^n \times P \to M$ so that for each $q \in P$, $f$ maps $\TT^n \times q$ onto $ \mathfrak{q}^{-1}(q)$.
\end{lemma}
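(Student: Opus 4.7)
The natural strategy is to produce a continuous section $s: P \to M$ of the orbit map $\mathfrak{q}$ and then define $f(t,q) := t \cdot s(q)$. Given any such $s$, continuity of $f$ is immediate from continuity of the $\TT^n$-action, and for each $q \in P$ one has $f(\TT^n \times q) = \TT^n \cdot s(q) = \mathfrak{q}^{-1}(q)$: the inclusion $\subseteq$ holds because $\mathfrak{q}(t \cdot s(q)) = \mathfrak{q}(s(q)) = q$, while the reverse inclusion follows from the fact that $\mathfrak{q}^{-1}(q)$ is a single $\TT^n$-orbit (by condition (2) of Definition \ref{qtd02}, each fiber is one orbit). So the whole lemma reduces to the existence of a continuous section.

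I would construct $s$ by induction over the face skeleton of $P$, from low to high dimension. At each vertex $v$ of $P$ (codimension $n$), condition (2) of Definition \ref{qtd02} forces $\mathfrak{q}^{-1}(v)$ to be a single $\TT^n$-fixed point, so $s(v)$ is uniquely determined. Around this fixed point, local standardness supplies a $\TT^n$-equivariant chart identifying a neighborhood with a $\TT^n$-stable open subset of $\CC^n$ under the standard coordinate rotation; in this chart the inclusion $(\RR_{\vargeq 0})^n \hookrightarrow \CC^n$ of the positive real orthant gives a preferred local section, extending $s$ over a neighborhood of $v$. For higher dimensional faces $F$, I extend inductively: over the interior of a codimension-$l$ face $F$, the restricted orbit map $\mathfrak{q}^{-1}(\mathrm{int}(F)) \to \mathrm{int}(F)$ is a principal $\TT^{n-l}$-bundle (the stabilizer is constant on $\mathrm{int}(F)$ by local standardness); contractibility of $\mathrm{int}(F)$ makes this bundle trivial, so any section already defined on $\partial F$ by the inductive hypothesis extends continuously across $F$.

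The main obstacle is the consistent gluing of the preferred local sections across face strata, since two adjacent vertex charts will generally produce preferred orthant sections that disagree on their overlap. Rather than try a global gluing, I would do it one face at a time: whenever the inductively constructed section is already defined on $\partial F$ and a local standard chart is being used on a neighborhood of a boundary point of $F$, the ambiguity between the two sections is measured by a continuous $\TT^n$-valued function on the overlap, which one uses to translate the local preferred section into agreement with the global one; the principal-bundle extension over $\mathrm{int}(F)$ then takes over. Once the global section $s: P \to M$ has been assembled in this way, the formula $f(t,q) = t \cdot s(q)$ yields the desired continuous projection map with $f(\TT^n \times q) = \mathfrak{q}^{-1}(q)$ for every $q \in P$. \qed
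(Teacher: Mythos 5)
The reduction to constructing a continuous section $s:P\to M$ of $\mathfrak{q}$ and then setting $f(t,q)=t\cdot s(q)$ is the right idea, and it is essentially the route taken by Davis--Januszkiewicz. The problem is in your face-by-face extension step. You assert that because $\mathfrak{q}^{-1}(\mathrm{int}(F))\to\mathrm{int}(F)$ is a trivial principal $\TT^{n-l}$-bundle over a contractible base, ``any section already defined on $\partial F$ by the inductive hypothesis extends continuously across $F$.'' This does not follow from bundle triviality. First, the inductively defined section on $\partial F$ takes values in degenerate orbits --- the fibers over points of $\partial F$ are quotient tori of strictly smaller dimension --- so it is not a section of the $\TT^{n-l}$-bundle at all; to compare, you must first promote it to a genuine $\TT^{n-l}$-bundle section over a collar of $\partial F$ inside $\mathrm{int}(F)$, which already involves choices. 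Second, even granting such a collar section, extending it from the boundary sphere $\partial F\cong S^{\dim F-1}$ across the ball $F$ meets a potential obstruction in $\pi_{\dim F-1}(\TT^{n-l})$; when $\dim F=2$ (so $n-l=2$) this group is $\pi_1(\TT^{2})=\ZZ^{2}\neq 0$, so triviality of the bundle over $\mathrm{int}(F)$ does not by itself give the extension. What actually kills the obstruction is the local standard structure near the lower-dimensional strata of $\partial F$, which forces the collar section to approach the degenerate orbits and thereby have vanishing winding --- but that is precisely the content your sketch omits. Your final gluing paragraph (``translate the local preferred section into agreement'') has the same difficulty in disguise: it assumes away exactly the cohomological obstruction that must be shown to vanish. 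So the overall strategy matches the source, but the central technical step --- extending the section across each face past the degenerate boundary strata --- is a genuine gap as written.
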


Define an equivalence relation $\sim_2$ on $\ZZ^n$ by $x \sim_2 y$ if and only if $y = \pm x$.
Denote the equivalence class of $x$ in the quotient space $\ZZ^n/\ZZ_2$ by $[x]$.
\begin{defn} \label{charfun}
A function $\eta : \mathcal{F}(P) \to \ZZ^n/\ZZ_2$ is called characteristic function if the submodule generated by
$\{\eta(F_{j_1}), \ldots, \eta(F_{j_l})\}$ is an $l$-dimensional direct summand of $\ZZ^n$ whenever
the intersection of the facets $F_{j_1}, \ldots, F_{j_l}$ is nonempty.

The vectors $ \eta(F_{j})$ are called characteristic vectors and the pair $(P, \eta)$ is called a characteristic pair.
\end{defn}

In \cite{DJ} the authors show that we can construct a quasitoric manifold from the pair $(P, \eta)$.
Also given quasitoric manifold we can associate a characteristic pair to it up to choice of signs of
characteristic vectors. For simplicity of notations we may write the images of
characteristic and isotropy functions by their class representative. The isotropy function is defined in Section \ref{def}.

\begin{defn}
 Two actions of $\TT^n$ on $2n$-dimensional quasitoric manifolds $M_1$ and $M_2$ are called equivalent
if there is a homeomorphism $ f : M_1 \to M_2$ such that $f(t \cdot x) = t \cdot f(x)$ for all
$(t, x ) \in \TT^n \times M_1$. 
\end{defn}

\begin{defn}\label{defdel}
Let $\delta : \TT^n \to \TT^n$ be an automorphism. Two quasitoric manifolds $M_1$ and $M_2$ over the same
polytope $P$ are called $\delta$-equivariantly homeomorphic if there is a homeomorphism $ f : M_1 \to M_2$
such that $f(t \cdot x) = \delta(t)\cdot f(x)$ for all $(t, x ) \in \TT^n \times M_1$.

When $\delta$ is the identity automorphism, $f$ is called an equivariant homeomorphism.
\end{defn}

\begin{lemma}[\cite{DJ}, Proposition 1.8]\label{clema2}
Let $\mathfrak{q}:M \to P$ be a $2n$-dimensional quasitoric manifold over $P$ and
$\eta:\mathcal{F}(P) \to \ZZ^n/\ZZ_2$ be its associated characteristic function. Let
$\mathfrak{q}_{M}: M(P, \eta) \to P$ be the quasitoric manifold constructed from the pair $(P, \eta)$.
Then the map $f: \TT^n \times P \to M$ of Lemma \ref{clema} descends to an equivariant homeomorphism
$ M(P, \eta) \to M$ covering the identity on $P$.
\end{lemma}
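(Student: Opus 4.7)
The plan is to recognize $M(P, \eta)$ as $(\TT^n \times P)/{\sim}$, where $(t_1, p_1) \sim (t_2, p_2)$ iff $p_1 = p_2$ and $t_1^{-1}t_2 \in \TT_F$, with $F$ the unique face whose relative interior contains $p_1$ and $\TT_F \subset \TT^n$ the subtorus generated by the characteristic vectors of the facets cutting out $F$. Thus the proof reduces to showing that the fibers of the map $f : \TT^n \times P \to M$ furnished by Lemma \ref{clema} are exactly the $\sim$-classes, and then upgrading the resulting continuous bijection to a homeomorphism.

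First I would note that $f$ is $\TT^n$-equivariant (acting on the $\TT^n$-factor of the domain) and covers the identity on $P$, both by construction in Lemma \ref{clema}. Fix $p \in P$ and let $F$ be the face with $p \in \mathrm{relint}(F)$. Since $f(\TT^n \times \{p\}) = \mathfrak{q}^{-1}(p)$ is a single $\TT^n$-orbit, the fiber over any $x \in \mathfrak{q}^{-1}(p)$ has the form $t \cdot S_x \times \{p\}$, where $S_x$ denotes the stabilizer of $x$. The task, therefore, is to identify $S_x = \TT_F$.

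The main step is this identification of stabilizers. Fix a vertex $v$ in the closure of $F$; write $v = F_{j_1} \cap \cdots \cap F_{j_n}$ with $F = F_{j_{i_1}} \cap \cdots \cap F_{j_{i_l}}$ for some subset of these facets. Local standardness at the unique fixed point $x_v \in \mathfrak{q}^{-1}(v)$ gives a chart $(\psi, \delta_{x_v})$ intertwining the $\TT^n$-action near $x_v$ with the standard action on an open set in $\CC^n$. Under this chart the facets $F_{j_k}$ correspond to the coordinate hyperplanes through $0$, and the stabilizer of a point of $M$ lying in the preimage of the codimension-$l$ face $F$ is the subtorus $\delta_{x_v}^{-1}(S^1_{i_1} \times \cdots \times S^1_{i_l})$. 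By the construction of the associated characteristic function sketched after Definition \ref{charfun}, the primitive generators of this subtorus are exactly $\eta(F_{j_{i_k}})$ up to sign, so the stabilizer is $\TT_F$. Because the orbit type is constant on the relative interior of $F$, this local identification propagates to all of $\mathfrak{q}^{-1}(\mathrm{relint}(F))$.

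Once the fibers are matched, $f$ descends to a continuous $\TT^n$-equivariant bijection $\tilde{f} : M(P, \eta) \to M$ covering $\mathrm{id}_P$. Both spaces are compact Hausdorff ($M(P, \eta)$ as a quotient of the compact space $\TT^n \times P$, and $M$ as a closed manifold), so $\tilde{f}$ is a homeomorphism. The principal obstacle is the stabilizer identification: one needs local standardness to produce the chart, the direct-summand hypothesis of Definition \ref{charfun} to see that the characteristic vectors really are the primitive generators of the isotropy lattice, and a connectedness argument to promote the statement from near a vertex to the whole relative interior of each face.
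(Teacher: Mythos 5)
The paper itself supplies no proof of this lemma---it is cited directly from Davis--Januszkiewicz (Proposition~1.8)---so there is no ``paper's own proof'' to compare against. Your reconstruction follows exactly the standard DJ argument: identify $M(P,\eta)$ with $(\TT^n\times P)/{\sim}$, use local standardness at a fixed point $x_v$ together with the definition of the associated characteristic function to match the stabilizer over $\mathrm{relint}(F)$ with $\TT_F$, propagate that identification along the connected stratum $\mathfrak{q}^{-1}(\mathrm{relint}(F))$, and finish with the compact-to-Hausdorff homeomorphism criterion. This is correct and is essentially the intended proof.

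One tiny imprecision worth tightening: when you say ``the stabilizer of a point of $M$ lying in the preimage of the codimension-$l$ face $F$,'' you mean the preimage of $\mathrm{relint}(F)$ (points over $\partial F$ have strictly larger stabilizers); your closing sentence does make this restriction, so it is a wording issue rather than a gap.
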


The automorphism $\delta$ of Definition \ref{defdel} induces an automorphism $\delta_{\ast}$
of the poset of subtori of $\TT^n$ or equivalently, an automorphism $\delta_{\ast}$
of the poset of submodules of $\ZZ^n$. This automorphism descends to a $\delta$-$translation$ of
characteristic pairs, in which the two characteristic functions differ by $\delta_{\ast}$.
Using Lemma \ref{clema} and \ref{clema2} we can prove the following Proposition.

\begin{prop}[\cite{BP}, Proposition 5.14]\label{probi}
There is a bijection between $\delta$-equivariant homeomorphism classes of quasitoric manifolds and
$\delta$-translations of characteristic pairs $(P, \eta)$.
\end{prop}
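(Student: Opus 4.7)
The plan is to use Lemma \ref{clema2} to replace each quasitoric manifold $M$ by the canonical model $M(P,\eta)=(\TT^n\times P)/\!\sim_\eta$, where $(t_1,p)\sim_\eta(t_2,p)$ iff $t_1 t_2^{-1}$ lies in the subtorus $T^\eta_F\subseteq\TT^n$ generated by $\{\eta(F_j):F\subseteq F_j\}$, with $F$ the unique face whose relative interior contains $p$. Any $\delta$-equivariant homeomorphism descends to a combinatorial equivalence of the orbit polytopes, so after relabelling facets one may assume both canonical models sit over the same $P$ and that the homeomorphism covers the identity on $P$. The task thus reduces to showing that $M(P,\eta_1)$ and $M(P,\eta_2)$ are $\delta$-equivariantly homeomorphic over the identity on $P$ if and only if $\eta_2=\delta_\ast\circ\eta_1$ in $\ZZ^n/\ZZ_2$.

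For necessity, fix a $\delta$-equivariant homeomorphism $f:M(P,\eta_1)\to M(P,\eta_2)$ covering the identity. The $\TT^n$-isotropy of any point over the relative interior of a facet $F_j$ is the circle determined by $\eta_1(F_j)$ on the source side and by $\eta_2(F_j)$ on the target side; the relation $f(t\cdot x)=\delta(t)\cdot f(x)$ forces the second circle to be the $\delta$-image of the first, giving $\eta_2(F_j)=\pm\delta_\ast\eta_1(F_j)$ and hence $\eta_2=\delta_\ast\circ\eta_1$ as functions valued in $\ZZ^n/\ZZ_2$. The higher-codimension compatibility is automatic since $T^\eta_F$ for a codimension-$l$ face is generated by the characteristic vectors of the $l$ facets containing $F$. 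For sufficiency, assume $\eta_2=\delta_\ast\circ\eta_1$ and define $\widetilde f:\TT^n\times P\to\TT^n\times P$ by $(t,p)\mapsto(\delta(t),p)$. Since $\delta(T^{\eta_1}_F)=T^{\eta_2}_F$ for every face $F$, the map $\widetilde f$ carries $\sim_{\eta_1}$ onto $\sim_{\eta_2}$ and descends to a homeomorphism $M(P,\eta_1)\to M(P,\eta_2)$ that intertwines the $\TT^n$-actions via $\delta$; together with the previous step these two constructions are mutually inverse on equivalence classes.

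The main obstacle to keep track of is the $\ZZ^n/\ZZ_2$ ambiguity in Definition \ref{charfun}: both characteristic vectors and $\delta$-translations are only defined up to signs, and $\delta$-equivariant homeomorphisms are allowed to permute facets along combinatorial automorphisms of $P$. The bijection must therefore be formulated between equivalence classes on both sides that absorb these ambiguities, which is exactly why the necessity argument is compelled to conclude in $\ZZ^n/\ZZ_2$ rather than $\ZZ^n$, matching the definition of characteristic function used throughout the paper.
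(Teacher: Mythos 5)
Your proposal is correct and follows exactly the route the paper indicates by pointing to Lemmas~\ref{clema} and~\ref{clema2}: reduce to the canonical models $M(P,\eta)$, read off necessity from the $\TT^n$-isotropy over facet interiors, and prove sufficiency by letting $(t,p)\mapsto(\delta(t),p)$ descend. The only imprecision worth noting is the ``after relabelling facets one may assume \dots covers the identity on $P$'' step: a $\delta$-equivariant homeomorphism over a fixed $P$ in general covers a face-preserving self-homeomorphism $\bar f$ of $P$ rather than the identity, so without further justification the necessity argument yields $\eta_2\circ\bar f=\delta_\ast\circ\eta_1$ in $\ZZ^n/\ZZ_2$; matching this with the paper's definition of $\delta$-translation (and with Remark~\ref{equiclas}, which treats $\delta=\mathrm{id}$ as giving literal equality of characteristic functions) requires either restricting to homeomorphisms covering the identity or absorbing $\bar f$ into the notion of equivalence of pairs. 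This is the same ambiguity present in the cited statement itself, and the core argument is sound.
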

\begin{remark}\label{equiclas}
 Suppose $\delta$ is the identity automorphism of $\TT^n$. From Proposition \ref{probi} we have two quasitoric
manifolds are equivariantly homeomorphic if and only if their characteristic functions are the same.  
\end{remark}
\begin{remark}\label{orientq}
 A quasitoric manifold $M$ over $P$ is simply connected. So $M$ is orientable.
A choice of orientation on $\TT^n $ and $ P$ gives an orientation on $M$.
In this article we fix the positive orientation on $\TT^n$. The orientation
on the circle subgroup determined by the vectors $\eta(F_j)$ is the induced
orientation of $\TT^n$. So an orientation of $P$ determines an orientation
of the corresponding quasitoric manifolds. 
\end{remark}
\begin{conn}
Equivariant connected sum of oriented quasitoric manifolds is discussed explicitly in
section $6$ of \cite{BR}. We discuss the equivariant connected sum of quasitoric manifolds
briefly following \cite{DJ} and \cite{BR}.
Let $\mathfrak{q}_1: M_1 \to P_1$ and $\mathfrak{q}_2: M_2 \to P_2$ be two $2n$-dimensional
oriented quasitoric manifolds over $P_1$ and $P_2$ respectively. Let $x_1 \in M_1$ and $x_2 \in M_2$
be two fixed points. Changing the action (if necessary) of $\TT^n$ on $M_2$ by an automorphism of
$\TT^n$, we can assume that $\TT^n$ actions on a $\TT^n$ invariant neighborhood $U_{1}$ of $x_1$ and
$U_2$ of $x_2$ are equivalent. Let $B_1 \subseteq U_1$ and $B_2 \subseteq U_2$ be two invariant open
ball around $x_1$ and $x_2$ respectively. Identifying the boundary spheres of $M_1 - B_1$ and $M_2 - B_2$ via an orientation
reversing (with respect to the induced orientation) equivariant diffeomorphism we get a smooth manifold,
denoted by $M_1 \# M_2$, with a natural locally standard $\TT^n$ action. The orbit space $P_1 \# P_2$ of
this action can be described as follows. Let $\mathfrak{q}_1(x_1)=v_1$ and $\mathfrak{q}_2(x_2)=v_2$
be the corresponding vertices in $P_1$ and $P_2$ respectively. Delete a neighborhood $\bigtriangleup_{v_1}$ of $v_1$ in $P_1$
such that the closer of $\bigtriangleup_{v_1}$ in $P_1$ is diffeomorphic to the $n$-simplex. Let $P_1^{\prime}$ be
the resulting polytope. Then $P_1^{\prime}$ has a new facet $\bigtriangleup^{n-1}(v_1)$ which is an $(n-1)$-simplex.
Similarly we construct the polytope $P_2^{\prime}$ from $P_2$. Let $F^i_1, F^i_2, \ldots, F^i_n$ be the facets
meeting at $v_i$ of $P_i$. Since the actions of $\TT^n$ in a neighborhood of $x_1$ and $x_2$ are equivalent,
we may assume that the characteristic vector of $F_j^1$ and $F_j^2$ are same for $j=1, 2, \ldots, n$.
We can obtain the space $P_1 \# P_2$ by gluing the polytopes $P^{\prime}_1$ and $P^{\prime}_2$ along
$\bigtriangleup^{n-1}(v_1)$ and $\bigtriangleup^{n-1}(v_2)$ so that $F^1_j$ and $F^2_j$ make a new
facet for $j = 1, \ldots, n$. Then $M_1 \# M_2$ is an oriented quasitoric manifold over $P_1 \# P_2$.
The manifold $M_1 \# M_2$ is called the equivariant connected sum of $M_1$ and $M_2$.
\end{conn}

\begin{example}\label{triangle}
Let $Q$ be a triangle $\bigtriangleup^2$ in $\RR^2$. The possible characteristic functions are indicated
by the following Figures \ref{egch001}.
\begin{figure}[ht]
\centerline{
\scalebox{0.63}{
\input{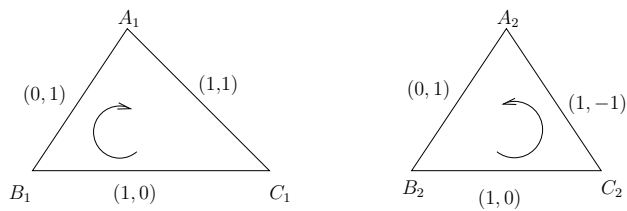}
 }
 }
\caption {The characteristic functions corresponding to a triangle.}
\label{egch001}
\end{figure}
The quasitoric manifold corresponding to the first characteristic pair is $\CP^2$ with the usual $\TT^2$ action
and standard orientation, we denote it by $\CP^2_s$. The second correspond to the same $\TT^2$ action with
the reverse orientation on $\CP^2$, we denote this quasitoric manifold by $\bar{\CP^2}_s$.
\end{example}
Note that there are many non-equivariant $\TT^2$-actions on $\CP^2$. We discuss this classification in Section \ref{cobort2}. 

\begin{example}\label{square}
Suppose that $Q$ is combinatorially a square in $\RR^2$. In this case there are many possible
characteristic functions. Some examples are given by the Figure \ref{egch002}.
\begin{figure}[ht]
\centerline{
\scalebox{0.64}{
\input{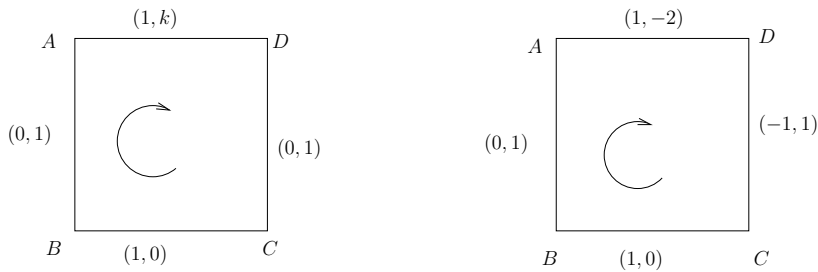}
 }
 }
\caption {Some characteristic functions corresponding to a square.}
\label{egch002}
\end{figure}

The first characteristic pairs may construct an infinite family of $4$-dimensional quasitoric manifolds,
denote them by $M_k^4$ for each $k \in \ZZ$. The manifolds $\{M_k^4 : k \in \ZZ\}$ are equivariantly distinct.
Let $L(k)$ be the complex line bundle over $\CP^1$ with the first Chern class $k$. The complex manifold
$\CP(L(k) \oplus \CC)$ is the Hirzebruch surface for the integer $k$, where $\CP(\cdot)$ denotes the
projectivisation of a complex bundle. So each Hirzebruch surface is the total space of the bundle
$\CP(L(k) \oplus \CC) \to \CP^1$ with fiber $\CP^1$. It is well-known
that with the natural action of $\TT^2$ on $\CP(L(k) \oplus \CC)$ it is equivariantly homeomorphic to
$M_k^4$ for each $k$, see \cite{Oda}. That is, with respect to the $\TT^2$-action, Hirzebruch surfaces are quasitoric
manifolds where the orbit space is a combinatorial square and the corresponding characteristic map
is described in Figure \ref{egch002}.

On the other hand the second combinatorial model gives the quasitoric manifold $ ~$
$\CP^2 ~\# ~\CP^2$, the equivariant connected sum of $\CP^2$. 
\end{example}
The following remark classifies all $4$-dimensional quasitoric manifolds.
\begin{remark}\label{clasq}
Orlik and Raymond ( \cite{OR}, p. 553) show that any $4$-dimensional quasitoric manifold $M^4$ over
$2$-dimensional simple polytope is an equivariant connected sum of several copies of $\CP^2$,
$\bar{\CP^2}$ and $M_k^4$ for some $k\in \ZZ$.
\end{remark}

\section{Edge-Simple Polytopes }\label{poly}
In this section we introduce a particular type of polytope, which we call an edge-simple polytope.
This polytopes are generalization of simple polytopes. 
\begin{defn}
An $n$-dimensional convex polytope $P$ is called an $n$-dimensional edge-simple polytope if each edge
of $P$ is the intersection of exactly $(n-1)$ facets of $P$.
\end{defn}

\begin{example}
\begin{enumerate}
\item An $ n $-dimensional simple convex polytope is an $ n $-dimensional edge-simple polytope.

\item The following convex polytopes are edge-simple polytopes of dimension $3$.

\begin{figure}[ht]
 \centerline{
 \scalebox{0.72}{
 \input{eg1.pstex_t}
  }
  }
\end{figure}

\item The dual polytope of a $ 3 $-dimensional simple convex polytope is a $ 3 $-dimensional
 edge-simple polytope. This result is not true for higher dimensional polytopes, that is if $P$ is a simple
convex polytope of dimension $n \geq 4$ the dual polytope of $P$ may not be an edge-simple polytope. For
example the dual of the $4$-dimensional standard cube in $\RR^4$ is not an edge-simple polytope.
\end{enumerate}
\end{example}

\begin{prop}
\textbf{(a)} If $ P $ is a $ 2 $-dimensional simple convex polytope then the suspension $ SP $ on $ P $
is an edge-simple polytope and $SP$ is not a simple convex polytope.

\textbf{(b)}  If $ P $ is an $ n $-dimensional simple convex polytope then the cone  $ CP $ on $ P $
is an $ (n+1) $-dimensional edge-simple polytope.
\end{prop}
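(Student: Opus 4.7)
The plan is to reduce both parts to explicit enumerations of the facets and edges of the two constructions, after which edge-simplicity becomes a direct incidence count.

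For part (a), suppose $P$ is a combinatorial $m$-gon with $m \geq 3$. Then $SP$ is realized as the $m$-gon bipyramid: its vertex set is $V(P) \cup \{v_+, v_-\}$, and its facets are exactly the $2m$ triangles $\mathrm{conv}(e \cup \{v_\pm\})$ as $e$ ranges over the edges of $P$ and the sign varies. Its edges consist of the $m$ equatorial edges of $P$ together with the $2m$ meridian edges $[v, v_\pm]$ for $v \in V(P)$. I would verify edge-simplicity by a case split: an equatorial edge $e$ lies on exactly the two triangles $\mathrm{conv}(e \cup \{v_+\})$ and $\mathrm{conv}(e \cup \{v_-\})$, while a meridian edge $[v, v_\pm]$ lies on exactly the two triangles through $v_\pm$ built over the two edges of $P$ meeting at $v$. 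Since $\dim SP - 1 = 2$, in both cases the required count is met.

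To show $SP$ is not simple, I would count facets at vertices. All $m$ triangles with apex $v_+$ meet at $v_+$, so $m$ facets meet there; for $m \geq 4$ this already exceeds $\dim SP = 3$. For $m = 3$ (when the apex looks simple) I would instead use an equatorial vertex $v$, which lies on two equatorial edges and hence on two triangles through $v_+$ and two through $v_-$, giving four facets at $v$ and violating simplicity.

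For part (b), I would describe $CP$ as $\mathrm{conv}(P \cup \{a\})$ for an apex $a$ off the affine hull of $P$. Its facets are the base $P$ itself, together with the cone facets $CF = \mathrm{conv}(F \cup \{a\})$ as $F$ ranges over the facets of $P$; its edges are the edges of $P$ and the meridian edges $[v, a]$ for each vertex $v$ of $P$. For an edge $e$ of $P$, simplicity of $P$ forces $e$ to be the intersection of exactly $n-1$ facets of $P$, producing $n-1$ cone facets through $e$; combined with the base $P$ itself, $e$ lies on exactly $n = \dim CP - 1$ facets of $CP$. For a meridian edge $[v, a]$, simplicity at $v$ gives exactly $n$ facets of $P$ through $v$, hence $n$ cone facets through $[v, a]$; since $[v, a] \not\subset P$, again exactly $n$ facets meet along it, establishing edge-simplicity.

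The only real point of care is justifying the combinatorial description of $SP$ and $CP$; once those face incidences are written down, nothing beyond counting is required, and no genuine obstacle arises.
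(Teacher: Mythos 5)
Your proposal is correct and follows essentially the same route as the paper: explicitly enumerate the facets and edges of $SP$ (resp. $CP$) and count, edge by edge, the incident facets using the simplicity of $P$. The only cosmetic difference is your case split on $m$ for the non-simplicity of $SP$, which is unnecessary — the paper's observation that any equatorial vertex $v$ of $P$ lies on four facets of $SP$ already works uniformly for all $m \geq 3$.
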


\begin{proof}
{\it{\textbf{(a)}}} Let $ P $ be a $ 2 $-dimensional simple polytope with $m$ vertices
$\{v_i \colon i \in I = \{ 1, 2, \ldots, m\} \}$
and $ m $ edges $\{e_i \colon i \in I\}$. Let $ a $ and $ b $ be the other two vertices of $ SP $.
Then facets of $ SP $ are the cone $(Ce_i)_x$ on $e_i$ at $ x=a,b $. Edges of $SP$ are
$\{ xv_i \colon x=a, b$ and $ i \in I\} \cup \{e_i : i \in I\}$. The edge $ xv_i $ is the intersection of $ (Ce_{i_1})_x$
and $ (Ce_{i_2})_x$ if $ v_i = e_{i_1} \cap e_{i_2} $ for $x =a, b$ and $e_i = (Ce_i)_a \cap (Ce_i)_b$.
Hence $ SP $ is an edge-simple polytope. If $v$ is a vertex of the polytope $P$, $v$ is the intersection
of $4$ facets of $SP$. So $SP$ is not a simple convex polytope.

{\it{\textbf{(b)}}} Let $ P $ be an $ n $-dimensional simple convex polytope in $\RR^n \times 0 \subseteq \RR^{n+1}$
with $ m $ facets $\{F_i \colon i \in I = \{ 1, 2, \ldots, m \}\}$ and $ k $ vertices
$\{v_1, v_2, \ldots, v_k \}$.
Assume that the cone are taken at a fixed point $a$ in $\RR^{n+1} - \RR^n$ lying above the centroid of $P$.
Then facets of $ CP $ are $ \{ (CF_i) \colon i=1,2, \ldots, m\} \cup \{P\}$. Edges of $ CP $ are
$ \{av_i = C (\{v_i\}) \colon i=1, 2, \ldots, k\} \cup \{e_l : e_l ~\mbox{is an edge of}~ P\}$.
Since $ P $ is a simple convex polytope, each vertex $ v_i $ of $ P $ is the intersection
of exactly $ n $ facets of $ P $, namely $\{ v_i\} = \cap_{j=1}^{n}  F_{i_j} $ and each
edge $ e_l $ is the intersection of unique collection of $(n-1)$ facets $\{F_{l_1}, \ldots, F_{l_{n-1}}\}$.
Then $ C\{v_i\} = \cap_{j=1}^{n} CF_{i_j}$ and $ e_l = P \cap CF_{l_1} \cap CF_{l_2} \cap \ldots \cap CF_{l_{n-1}} $.
That is $C\{v_i\}$ and $\{e_l\}$ are the intersection of exactly $n$ facets of $ CP $.
Hence $ CP $ is an $ (n+1) $-dimensional edge-simple polytope.
\end{proof}

Cut off a neighborhood of each vertex $ v_i, i= 1, 2, \ldots, k $ of an $n$-dimensional edge-simple polytope
$ P \subset \RR^n $ by an affine hyperplane $ H_i, i= 1, 2, \ldots, k $ in $\RR^n$ such that $H_i \cap H_j \cap P$
are empty sets for $i \neq j$.
Then the remaining subset of the convex polytope $P$ is a simple convex polytope of dimension $n$, denote it by $ Q_P $.
Suppose $ P_{ H_i} = P \cap H_i = H_i \cap Q_P$ for $ i=1,2, \ldots k $. Then $ P_{H_i}$ is a facet of $Q_P$
called the facet corresponding to the vertex $v_i$ for each $i= 1, \ldots, k $. Since each vertex of $P_{H_i}$
is an interior point of an edge of $P$ and $P$ is an edge-simple polytope, $P_{H_i}$ is an $(n-1)$-dimensional simple convex
polytope for each $i= 1, 2, \ldots, k $.

\begin{lemma}
Let $ F $ be a codimension $l < n$ face of $ P $. Then $F$ is the intersection of unique set of $ l$ facets of $P$.
\end{lemma}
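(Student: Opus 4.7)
The plan is to reduce the codimension-$l$ case to the codimension-$(n-1)$ (edge) case, where the edge-simple hypothesis applies by definition. The key observation is that every face of $P$ of positive dimension contains an edge of $P$, and that edge constrains which facets of $P$ can contain the face.

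First I would note that, since $l < n$, the face $F$ has dimension $n-l \geq 1$, so $F$ is itself a convex polytope of positive dimension and hence contains at least one edge $e$; this $e$ is then also an edge of $P$. By the edge-simple hypothesis, there exist exactly $n-1$ facets $F_{j_1}, \ldots, F_{j_{n-1}}$ with $e = F_{j_1} \cap \cdots \cap F_{j_{n-1}}$. Because these $n-1$ facets cut out a face of codimension $n-1$, the affine hull of $e$ equals the intersection of their supporting hyperplanes, and so their outward normals are linearly independent in $\RR^n$; this is the one genuinely geometric input.

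Next, I would let $S$ denote the set of all facets of $P$ that contain $F$. Any $G \in S$ contains $e$, since $e \subseteq F \subseteq G$, and therefore lies in $\{F_{j_1}, \ldots, F_{j_{n-1}}\}$. Since $F$ is a face of $P$, it equals the intersection of the facets containing it, i.e.\ $F = \bigcap_{G \in S} G$. The normals of the facets in $S$ form a subset of a linearly independent collection, so they are themselves linearly independent, and consequently the codimension of $\bigcap_{G \in S} G$ is exactly $|S|$. Comparing with the codimension of $F$ yields $|S| = l$.

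For uniqueness, if $F = G_1 \cap \cdots \cap G_l$ for some facets $G_k$, then each $G_k \supseteq F$, so $G_k \in S$ for each $k$; since $|S| = l$, it follows that $\{G_1, \ldots, G_l\} = S$. The only nontrivial step is the linear-independence argument at the edge $e$: in a general polytope a codimension-$k$ face need not be cut out by $k$ independent hyperplanes, and it is precisely the edge-simple hypothesis that guarantees this at every edge and hence, by the containment argument above, at every face of codimension less than $n$.
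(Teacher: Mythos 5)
Your argument is correct, but it takes a genuinely different route from the paper. The paper passes to the truncated polytope $Q_P$, which it has already noted is \emph{simple}: the face $F$ meets $Q_P$ in a codimension-$l$ face of $Q_P$, simplicity gives a unique expression of $F \cap Q_P$ as an intersection of $l$ facets $F'_{i_j}$ of $Q_P$, and these lift to the corresponding facets $F_{i_j}$ of $P$. You instead argue intrinsically in $P$: pick an edge $e \subseteq F$, use edge-simplicity to get exactly $n-1$ facets through $e$, show their outward normals are linearly independent, and then observe that the facets through $F$ are a subset of these and count. Both work, and both ultimately rest on a convex-geometry input: the paper on the (earlier-asserted) simplicity of $Q_P$, you on the fact that the normals at an edge are independent.

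One step in your writeup is stated a bit too quickly, though the conclusion is right. You assert that ``because these $n-1$ facets cut out a face of codimension $n-1$, the affine hull of $e$ equals the intersection of their supporting hyperplanes.'' The equality $\mathrm{aff}(e) = \bigcap_i H_i$ does not follow merely from $\bigcap_i F_i = e$ having codimension $n-1$; a priori the hyperplane intersection could be strictly larger, with $P$ meeting it only in $e$. What is actually needed is the general fact that for any face $F$ of a polytope, $\mathrm{aff}(F)$ equals the intersection of the supporting hyperplanes of all facets containing $F$ (equivalently, the normal cone at $F$ has dimension equal to $\mathrm{codim}\, F$). Combined with edge-simplicity (exactly $n-1$ facets through $e$), this gives $n-1$ normals spanning an $(n-1)$-dimensional space, hence linearly independent. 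You invoke essentially the same fact again, implicitly, when you conclude that $\mathrm{codim}\bigl(\bigcap_{G \in S} G\bigr) = |S|$ from independence of the normals of $S$. With that fact made explicit the proof is complete, and it has the advantage over the paper's of not depending on the earlier unproved assertion that $Q_P$ is simple.
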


\begin{proof}
The intersection $ F \cap Q_P $ is a codimension $l$ face of $ Q_P $
not contained in $\cup_{i=0}^{k} \{P_{H_i}\}$.
Since $ Q_P $ is a simple convex polytope, $ F \cap Q_P = \cap_{j=1}^{l} F_{i_j}^{\prime} $ for some facets
$ \{F_{i_1}^{\prime}, \ldots, F_{i_l}^{\prime}\}$ of $ Q_P $. Let $ F_{i_j} $ be the unique facet of $ P $
such that $F_{i_j}^{\prime} \subseteq F_{i_j} $. Then $ F = \cap_{1}^{l} F_{i_j} $.
Hence each face of $P$ of codimension $l < n$ is the intersection of unique set of $ l$ facets of $P$.
\end{proof}

\begin{remark}
If $v_i$ is the intersection of facets $ \{F_{i_1}, \ldots, F_{i_l}\}$ of $P$ for some positive
integer $l$, the facets of $P_{H_i}$ are $ \{P_{H_i} \cap F_{i_1}, \ldots, P_{H_i} \cap F_{i_l}\}$.
\end{remark}

\section{Construction of Manifolds with Boundary}\label{def}
Let $ P $ be an edge-simple polytope of dimension $ n $ with $ m $ facets $ F_1, \ldots, F_{m}$
and $k$ vertices $ v_1, \ldots, v_{k} $.
Let $ e $ be an edge of $ P $. Then $ e $ is the intersection of unique collection of $(n-1)$ facets
$ \{F_{i_j} : j= 1, \ldots, (n-1)\}$. Let $ \mathcal{F}(P) = \{ F_1, \ldots, F_{m}\}$ and $\mathbb{F}_2^{n-1}$
be the $n-1$ dimensional vector space over $\mathbb{F}_2$, the field of integer modulo $2$.

\begin{defn}
The functions $ \lambda \colon \mathcal{F}(P) \to \ZZ^{n-1}/\ZZ_2 $ and $ \lambda^s \colon \mathcal{F}(P) \to \mathbb{F}_2^{n-1} $
are called the isotropy function and $\mathbb{F}_2$-isotropy function respectively of the edge-simple polytope $P$
if the set of vectors $ \{\lambda(F_{i_1}), \ldots, \lambda(F_{i_{n-1}})\}$ and
$ \{\lambda^s(F_{i_1}), \ldots, \lambda^s(F_{i_{n-1}})\}$ form a basis of $ \ZZ^{n-1} $ and $\mathbb{F}_2^{n-1} $ respectively whenever
the intersection of the facets $ \{F_{i_1}, \ldots, F_{i_{n-1}}\}$ is an edge of $P$.

The vectors $ \lambda_i := \lambda(F_i) $ and $ \lambda^s_i := \lambda^s(F_i) $ are called isotropy vectors
and $\mathbb{F}_2$-isotropy vectors respectively.
\end{defn}
We define some isotropy functions of the edge-simple polytopes $I^3$ and $P_0$ in examples \ref{egchar1} and \ref{egchar2} respectively.
\begin{remark}
It may not be possible to define an isotropy function on the set of facets of all edge-simple polytopes. For example there
does not exist an isotropy function of the standard $n$-simplex $\bigtriangleup^n$ for each $n\geq 3$.
\end{remark}
\subsection{Manifolds with Quasitoric Boundary}\label{defqb}
Let $ F $ be a face of $ P $ of codimension $ l < n$.
Then $ F $ is the intersection of a unique collection of $ l $ facets $F_{i_1}, F_{i_2}, \ldots, F_{i_l}$ of $ P $.
Let $ \TT_F $ be the torus subgroup of $ \TT^{n-1}$ corresponding to the submodule generated by
$ \lambda_{i_1}, \lambda_{i_2}, \ldots, \lambda_{i_l} $ in $ \ZZ^{n-1} $. Assume $\TT_v = \TT^{n-1}$ for each vertex $v$ of $P$.
We define an equivalence relation $\sim$ on the product $ \TT^{n-1}\times P $ as follows.
\begin{equation}
 (t,p) \sim (u,q) ~ \mbox{if and only if} ~  p = q ~ \mbox{ and} ~ tu^{-1} \in \TT_{F}
\end{equation}
where $ F \subset P$ is the unique face containing $ p $ in its relative interior. We denote the quotient
space $ (\TT^{n-1} \times P)/ \sim $ by $ X(P, \lambda) $. The space $ X(P, \lambda) $ is not a manifold except
when $P$ is a $2$-dimensional polytope.
If $P$ is $2$-dimensional polytope the space $ X(P, \lambda) $ is homeomorphic to the $3$-dimensional sphere.

But whenever $n > 2$ we can construct a manifold with boundary from the space $ X(P, \lambda) $.
We restrict the equivalence relation $\sim$ on the product $(\TT^{n-1} \times Q_P)$ where $Q_P \subset P$ is a simple
polytope as constructed in Section \ref{poly} corresponding to the edge-simple polytope $P$.
Let $ W(Q_P, \lambda) = (\TT^{n-1} \times Q_P )/\sim ~ \subset X(P, \lambda)$ be the quotient space.
The natural action of $ \TT^{n-1} $ on $ W(Q_P, \lambda) $ is induced by the group operation in $ \TT^{n-1} $.

\begin{theorem}
The space $ W(Q_P, \lambda) $ is a manifold with boundary. The boundary is a disjoint union of quasitoric manifolds.
\end{theorem}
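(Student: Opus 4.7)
The plan is to analyze the local model of $W(Q_P, \lambda)$ at each orbit and then identify the pieces of the boundary with quasitoric manifolds. Given $p \in Q_P$, let $F$ be the unique face of $P$ containing $p$ in its relative interior, let $l$ denote the codimension of $F$ in $P$, and let $F_{i_1}, \ldots, F_{i_l}$ be the facets of $P$ containing $F$. When $l \geq 1$, pick any edge $e \subseteq F$; the edge-simple hypothesis supplies $(n-1-l)$ further facets $F_{j_1}, \ldots, F_{j_{n-1-l}}$ through $e$ so that $\{\lambda(F_{i_1}),\ldots,\lambda(F_{i_l}),\lambda(F_{j_1}),\ldots,\lambda(F_{j_{n-1-l}})\}$ is a $\ZZ$-basis of $\ZZ^{n-1}$. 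In particular the isotropy vectors attached to $F$ span a rank-$l$ direct summand, so $\TT_F$ is a subtorus of $\TT^{n-1}$ of dimension $l$.

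First I would handle the interior points (those not on any cut hyperplane $H_i$). Using the basis above to split $\TT^{n-1} = \TT_F \times \TT^{n-1-l}$, the standard Davis--Januskiewicz local model produces an equivariant chart around the orbit of $p$ of the form $\CC^l \times \TT^{n-1-l} \times \RR^{n-l}$, where the last factor is transverse to $F$ in $Q_P$; this is a $(2n-1)$-dimensional interior chart. For a boundary point $p$ in the relative interior of some $P_{H_i}$ one has $F = P$ and $l = 0$, so the neighborhood is modeled on $\TT^{n-1} \times \RR^{n-1} \times \RR_{\geq 0}$; more generally, if $p \in P_{H_i} \cap F$ for a codimension-$l$ face $F$ of $P$ through $v_i$, the chart becomes $\CC^l \times \TT^{n-1-l} \times \RR^{n-1-l} \times \RR_{\geq 0}$, a $(2n-1)$-dimensional boundary chart whose final $\RR_{\geq 0}$ factor is the direction transverse to $H_i$ in $P$. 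Patching these charts equivariantly endows $W(Q_P, \lambda)$ with the structure of a $(2n-1)$-dimensional manifold with boundary.

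It remains to recognize the boundary. The hypothesis $H_i \cap H_j \cap P = \emptyset$ makes the preimages disjoint, so the boundary equals $\bigsqcup_{i=1}^{k} Y_i$ where $Y_i$ is the image of $\TT^{n-1} \times P_{H_i}$ in the quotient. Define $\lambda_i^H$ on the facets of $P_{H_i}$ by $\lambda_i^H(P_{H_i} \cap F_j) := \lambda(F_j)$ for each facet $F_j$ of $P$ through $v_i$. Every vertex $w$ of $P_{H_i}$ has the form $e \cap H_i$ for a unique edge $e$ of $P$ at $v_i$, and the $n-1$ facets of $P_{H_i}$ meeting at $w$ are exactly the $P_{H_i} \cap F_j$ with $F_j \supset e$; the edge-simple condition forces these $\lambda_i^H$ vectors to form a $\ZZ$-basis of $\ZZ^{n-1}$, so $(P_{H_i}, \lambda_i^H)$ is a characteristic pair in the sense of Definition \ref{charfun}. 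The two equivalence relations on $\TT^{n-1} \times P_{H_i}$ --- the one induced from $W(Q_P, \lambda)$ and the one defining $M(P_{H_i}, \lambda_i^H)$ --- collapse the same subtori over relative interiors of the same faces, yielding an equivariant homeomorphism $Y_i \cong M(P_{H_i}, \lambda_i^H)$. The main technical obstacle is the uniform verification of the direct-summand property at faces of codimension $l < n-1$, and it is precisely for this step that edge-simplicity (rather than a weaker face condition) is needed.
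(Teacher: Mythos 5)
Your proposal is correct and follows essentially the same strategy as the paper: use the edge-simple hypothesis and the isotropy condition along edges to guarantee that the subtorus $\TT_F$ collapsed over each face is a rank-$l$ direct summand, invoke the Davis--Januszkiewicz local model to produce $(2n-1)$-dimensional interior and $\RR_{\geq 0}$-factored boundary charts, and then recognize each boundary piece $(\TT^{n-1}\times P_{H_i})/\sim$ as a quasitoric manifold by restricting $\lambda$ to the facets of $P_{H_i}$. The paper packages the charts differently — it picks, for each edge $e$ of $P$, the open set $U_{e'}\subset Q_P$ obtained by deleting facets not containing $e' = e\cap Q_P$, maps it to $I^0\times\RR_{\geq 0}^{n-1}$, and uses the cut vertices $v_j'$ to get $[0,1)\times\RR^{2(n-1)}$ boundary charts, whereas you work orbit-by-orbit and stratify by the codimension $l$; the substance is the same. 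Two small remarks: you implicitly use the fact that a codimension-$l$ face of an edge-simple polytope is the intersection of exactly $l$ facets (the paper isolates this as a lemma, proved via the simplicity of $Q_P$), so that is worth flagging; and in the interior chart $\CC^l\times\TT^{n-1-l}\times\RR^{n-l}$ the last factor is \emph{tangent} to $F$, not transverse — the transverse directions are already absorbed into $\CC^l$.
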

For each edge $ e $ of $ P $, $ e^{\prime} = e \cap Q_P $ is an edge of the simple convex polytope $ Q_P $.
Let $ U_{e^{\prime}}$ be the open subset of $ Q_P $ obtained by deleting all facets of $ Q_P $ that does not
contain $ e^{\prime} $ as an edge. Then the set $ U_{e^{\prime}}$ is diffeomorphic to
$ I^{0} \times \RR_ {\geqslant 0}^{n-1} $ where $I^0$ is the open interval $(0,1)$ in $\RR$. The facets of
$ I^{0} \times \RR_ {\geqslant 0} ^{n-1} $ are $ I^{0} \times \{ x_1 = 0 \}, \ldots, I^{0} \times \{ x_{n-1} = 0 \} $
where $ \{x_j =0, ~ j=1, 2, \ldots, {n-1} \}$ are the coordinate hyperplanes in $ \RR^{n-1} $.
Let $ F_{i_1}^{\prime}, \ldots, F_{i_{n-1}}^{\prime} $ be the facets of $ Q_P $ such that
$ \cap_{j=1}^{n-1} F_{i_j}^{\prime} = e^{\prime} $.
Suppose the diffeomorphism $ \phi \colon U_{e^{\prime}} \to I^{0} \times \RR_ {\geqslant 0} ^{n-1} $
sends $ F_{i_j}^{\prime} \cap U_{e^{\prime}} $ to $ I^0 \times \{ x_j = 0 \} $ for all $j = 1, 2, \ldots, n-1$. Define an
isotropy function $ \lambda _{e} $ on the set of all facets of  $ I^{0} \times \RR_ {\geqslant 0} ^{n-1} $
by $ \lambda_{e} ( I^0 \times \{ x_j = 0 \} ) = \lambda_{i_j}$ for all $ j = 1, 2, \ldots, {n-1} $.
We define an equivalence relation $ \sim _e $ on $ ( \TT^{n-1} \times I^0 \times \RR_ {\geqslant 0}^{n-1} ) $
as follows.
\begin{equation}
 (t,b,x) \sim _e (u,c,y) ~ \mbox{ if and only if} ~ (b,x) = (c,y) ~ \mbox{ and} ~ tu^{-1} \in \TT_{\phi{(F)}}.
\end{equation}
where $ \phi{(F)} $ is the unique face of $ I^0 \times \RR_ {\geqslant 0} ^{n-1}$ containing $ (b,x) $ in its
relative interior, for a unique face $F$ of $U_{e^{\prime}} $ and $\TT_{\phi{(F)}} = \TT_F $.
So for each $ a \in I^0 $ the restriction of $ \lambda _{e} $ on
$\{ ( \{ a \} \times \{ x_j = 0 \}) : j = 1, 2, \ldots, {n-1}\} $ defines a characteristic function
(see Definition \ref{charfun}) on the set of facets of $ \{ a \} \times \RR_ {\geqslant 0} ^{n-1} $.
From the constructive definition of quasitoric manifold given in \cite{DJ} it is clear that the quotient space
$ \{ a \} \times ( \TT^{n-1} \times \RR_ {\geqslant 0} ^{n-1} ) / \sim_e $ is diffeomorphic to $ \{a\} \times \RR^{2(n-1)} $.
Hence the quotient space
$$ (\TT^{n-1} \times I^0 \times \RR_ {\geqslant 0} ^{n-1})/\sim_e ~ = ~ I^0 \times ( \TT^{n-1} \times \RR_ {\geqslant 0} ^{n-1} ) / \sim_e ~
\cong ~ I^0 \times \RR^{2(n-1)}.$$
Since the maps $ \pi : ( \TT^{n-1} \times U_{ e^{\prime}}) \to ( \TT^{n-1} \times U_{ e^{\prime} })/ \sim $
and $ \pi_e : (\TT^{n-1} \times I^0 \times \RR_{\geqslant 0}^{n-1} ) \to (\TT^{n-1} \times I^0 \times \RR_{\geqslant 0}^{n-1})/\sim_e $
are quotient maps and $ \phi $ is a diffeomorphism, the following commutative diagram ensure that the lower horizontal map
$ \phi _e $ is a homeomorphism.

\begin{equation}
\begin{CD}
( \TT^{n-1} \times U_{ e^{\prime}}) @>id \times \phi>> (\TT^{n-1} \times I^0 \times \RR_{\geqslant 0}^{n-1}) \\
@V\pi VV  @V \pi_e VV @. \\
( \TT^{n-1} \times U_{e^{\prime}}) / \sim @>\phi_e >> (\TT^{n-1} \times I^0 \times \RR_{\geqslant 0}^{n-1})/\sim_e @>\cong>> I^0 \times
\RR^{2(n-1)}
\end{CD}
\end{equation}

Let $v_1^{\prime}$ and $v_2^{\prime}$ be the vertices of the edge $e^{\prime}$ of $Q_P$.
Suppose $ H_{1} \cap e^{\prime} =\{ v_{1}^{\prime} \} $ and  $ H_{2} \cap e^{\prime} =\{ v_{2}^{\prime} \} $, where $ H_{1} $
and $ H_{2} $ are affine hyperplanes as considered in Section \ref{poly} corresponding to the vertices $v_1$ and $v_2$ of $e$ respectively.
Let $ U_{ v_1^{\prime} } $ and $ U_{ v_2^{\prime} } $ be the open subset of $ Q_P $ obtained by deleting all facets of $ Q_P $
not containing $ v_{1}^{\prime} $ and $ v_{2}^{\prime} $ respectively.
Hence there exist diffeomorphism $ \phi^1 : U_{ v_1^{\prime} } \to [0,1) \times \RR_{\geqslant 0}^{n-1} $ and
$ \phi^2 : U_{ v_2^{\prime} } \to [0,1) \times \RR_{\geqslant 0}^{n-1}$ satisfying the same property as the map $\phi$.
We get the following commutative diagram and homeomorphisms $ \phi_e^j $ for $ j = 1, 2 $.
\begin{equation}
\begin{CD}
( \TT^{n-1} \times U_{ v_j^{\prime}}) @>id \times \phi^j>> (\TT^{n-1} \times [0,1) \times \RR_{\geqslant 0}^{n-1})\\
@V\pi VV  @V \pi_e VV @.\\
(\TT^{n-1} \times U_{v_j^{\prime}})/\sim @>\phi_e^j>> (\TT^{n-1} \times [0,1) \times \RR_{\geqslant 0}^{n-1})/\sim_e @>\cong>> [0,1) \times \RR^{2(n-1)}
\end{CD}
\end{equation}

Hence each point of $ (\TT^{n-1} \times Q_P)/ \sim $ has a neighborhood homeomorphic to an open subset of $ [0,1) \times \RR^{2(n-1)} $.
So $ W( Q_P, \lambda ) $ is a manifold with boundary.
From the above discussion the interior of $ W( Q_P, \lambda ) $ is
$$\cup_{_{e^{\prime}}} (\TT^{n-1} \times U_{e^{\prime}})/\sim ~ = ~ W(Q_P, \lambda) \smallsetminus \{(\TT^{n-1} \times
\sqcup_{i=1}^{k} P_{H_i})/\sim \}$$
and the boundary is $\sqcup_{i=1}^{k} \{(\TT^{n-1} \times P_{H_i})/\sim \}$.
Let $F(H)_{i_j}$ be a facet of $P_{H_i}$. So there exists a unique facet $F_j$ of $P$ such that  $F(H)_{i_j} = F_j \cap Q_P \cap H_i$.
The restriction of the function $\lambda$ on the set of all facets of $P_{H_i}$ (namely $\lambda(F(H)_{i_j}) = \lambda_j$) give a
characteristic function of a quasitoric manifold over $P_{H_i}$. Hence restricting the equivalence relation $ \sim$ on
$(\TT^{n-1} \times P_{H_i})$ we get that the quotient space $W_i = (\TT^{n-1} \times P_{H_i})/\sim$ is a quasitoric
manifold over $P_{H_i}$.
Hence the boundary $ \partial{W}(Q_P, \lambda) $ is the disjoint union $ \sqcup_{_{i=1}}^{^{k}} W_i$, where $W_i $ is a quasitoric manifold.
So $ W( Q_P, \lambda ) $ is a manifold with quasitoric boundary.

In Section \ref{ori} we have shown that these manifolds with quasitoric boundary are orientable.

\begin{example}\label{egchar1}
An isotropy function of the standard cube $I^3$ is described in the following Figure \ref{egc1}.
Here simple convex polytopes $P_{H_1}, \ldots, P_{H_8}$ are triangles. The restriction of the isotropy
function on $P_{H_i}$ gives that the space $(\TT^2 \times P_{H_i})/\sim$ is the complex projective space
either $\CP^2$ or $\bar{\CP^2}$. Since antipodal map in $\RR^3$ is an orientation reversing map we can show that
the disjoint union $\sqcup_{i=1}^4 \CP^2 \sqcup_{i=1}^4 \bar{\CP^2}$ is the boundary of $(\TT^2 \times Q_{I^3})/ \sim$.

\begin{figure}[ht]
        \centerline{
           \scalebox{0.68}{
            \input{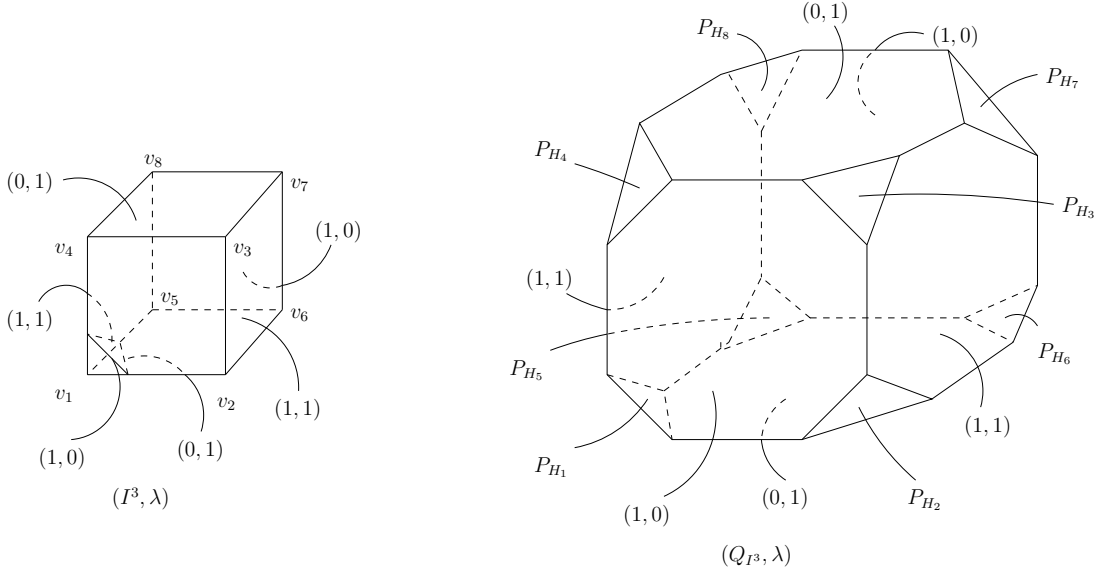}
            }
          }
       \caption {An isotropy function $\lambda$ of the edge-simple polytope $I^3$}
        \label{egc1}
      \end{figure}
\end{example}

\begin{example}\label{egchar2}
In the following Figure \ref{egc2} we define an isotropy function of the edge-simple polytope $ P_0$.
Here simple convex polytopes $P_{H_{1}},P_{H_{2}}, P_{H_{3}}, P_{H_{4}}$ are triangles and the simple
convex polytope $P_{H_{5}}$ is a rectangle. The restriction of the isotropy function on $P_{H_{i}}$
gives that the space $(\TT^2 \times P_{H_{i}})/\sim$ is either $\CP^2$ or $\bar{\CP^2}$
for each $i\in \{1,2,3,4\}$ and $(\TT^2 \times P_{H_{5}})/\sim$ is $\CP^1 \times \CP^1$. Hence the space
$\sqcup_{i=1}^2 \CP^2 \sqcup_{i=1}^2 \bar{\CP^2} \sqcup (\CP^1 \times \CP^1)$ is the boundary of
$W(Q_{P_0}, \lambda) := (\TT^2 \times Q_{P_0})/ \sim$, see subsection \ref{cobort2}.

\begin{figure}[ht]
        \centerline{
           \scalebox{0.73}{
            \input{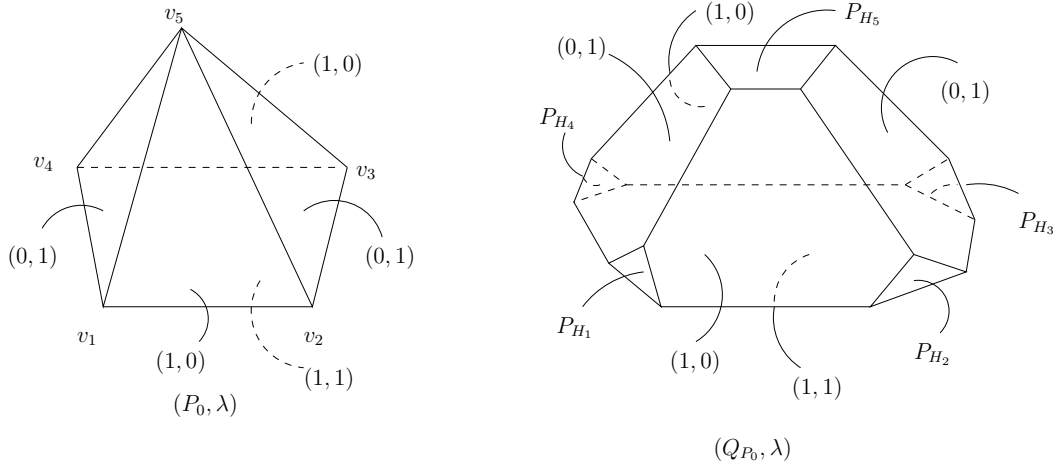}
            }
          }
       \caption {An isotropy function $\lambda$ of the edge-simple polytope $P_0$}
        \label{egc2}
      \end{figure}
\end{example}

\subsection{Manifolds with small cover boundary}\label{smbd}
We assign each face $F$ to the subgroup $G_F$ of $\mathbb{F}_2^{n-1}$ determined by the vectors
$\lambda^s_{i_1}, \ldots, \lambda^s_{i_l}$
where $F$ is the intersection of the facets $F_{i_1}, \ldots, F_{i_l}$. Let $\sim_s$ be an equivalence relation
on $(\mathbb{F}^{n-1}_2 \times P)$ defined by the following.
\begin{equation}
 (t,p) \sim_s (u,q) ~ \mbox{if and only if} ~  p = q ~ \mbox{ and} ~ t-u \in G_{F}
\end{equation}
where $ F \subset P$ is the unique face containing $ p $ in its relative interior.
The quotient space $(\mathbb{F}^{n-1}_2 \times Q_P)/ \sim_s ~ \subset (\mathbb{F}^{n-1}_2 \times P)/ \sim_s$,
denoted by $S(Q_P, \lambda^s)$, is a manifold with boundary.
This can be shown by the same arguments given in the subsection \ref{defqb}. The boundary of this manifold is
$\{(\mathbb{F}_2^{n-1} \times \sqcup_{i=1}^{k} P_{H_i})/\sim_s \} = \sqcup_{i=1}^{k}\{(\mathbb{F}_2^{n-1} \times P_{H_i})/\sim_s \}$.
Clearly the restriction of the $\mathbb{F}_2$-isotropy function $ \lambda^s $ on the set of all facets of $P_{H_i}$ gives the
characteristic function of a small cover over $P_{H_i}$. So $(\mathbb{F}_2^{n-1} \times P_{H_i})/\sim_s$ is a small cover for each
$i=0, \ldots, k $. Hence $S(Q_P, \lambda^s)$ is a manifold with small cover boundary.

\subsection{Some observations}\label{quot}
The set of all facets of the simple convex polytope $ Q_P $ are
$ \mathcal{F}({Q_P}) = \{ P_{ H_j } \colon j= 1,2, \ldots, k \} \cup \{ F_i ^{\prime} \colon i= 1,2, \ldots, m \} $,
where $ F_i^{\prime} = F_i \cap Q_P $ for a unique facets $F_i$ of $ P $.
We define the function $ \eta \colon \mathcal{F}( Q_P ) \to \ZZ^n/\ZZ_2 $ as follows.
\begin{equation}
 \eta( F ) = \left\{ \begin{array}{ll} [( 0, \ldots, 0, 1 )] \in \ZZ^n/\ZZ_2 & \mbox{if} ~ F = P_{H_j}~ \mbox{and} ~ j~ \in \{1, \ldots, k\} \\
 {[{\lambda_i}, 0]} \in \ZZ^{n-1}/\ZZ_2 \times\{0\} \subset \ZZ^n/\ZZ_2 & \mbox{if}~ F = F_i ~ \mbox{and} ~ i \in \{ 1, 2, \ldots, m\}
\end{array} \right.
\end{equation}
So the function $ \eta $ satisfies the condition for the characteristic function (see Definition \ref{charfun}) of a quasitoric manifold over
the $n$-dimensional simple convex polytope $Q_P$.
Hence from the characteristic pair $(Q_P, \eta)$ we can construct the quasitoric manifold $ M(Q_P, \eta ) $
over $Q_P$. There is a natural $ \TT^n $ action on $M(Q_P, \eta )$.
Let $ \TT_H $ be the circle subgroup of $ \TT^n $ determined by the submodule
$ \{0\} \times \{0\} \times \ldots \times \{0\} \times \ZZ $ of $ \ZZ^n $.
Hence $ W( Q_P, \lambda ) $ is the orbit space of the circle $ \TT_H $ action on $ M( Q_P, \eta ) $.
The quotient map $ \phi_H \colon M( Q_P, \eta ) \to W( Q_P, \lambda) $ is not a fiber bundle map.

\begin{remark}
The manifold $S(Q_p, \lambda_s)$ with small cover boundary constructed in subsection \ref{smbd} is the orbit space of $\ZZ_2$ action
on a small cover.
\end{remark}

\section{Orientability of $ W( Q_P, \lambda )$}\label{ori}
Suppose $W = W(Q_P, \lambda)$. The boundary $ \partial{W} $ has a collar neighborhood in $W$.
Hence by the proposition $2.22$ of \cite{Hat} we get $H_i(W,\partial W) = \widetilde{H}_i(W/\partial W)$ for all $i$.
We show the space $W/\partial{W}$ has a $CW$-structure. Actually we show that corresponding to each edge of $P$ there
exist an odd-dimensional cell of $W/\partial{W}$.
Realize $Q_P$ as a simple convex polytope in $\RR^n$ and choose a linear functional $\phi: \RR^n \to \RR$ which distinguishes
the vertices of $Q_P$, as in the proof of Theorem $3.1$ in \cite{DJ}. The vertices are linearly ordered according to
ascending value of $\phi$. We make the $1$-skeleton of $Q_P$ into a directed graph by orienting each edge such that
$\phi$ increases along edges. For each vertex $v$ of $ Q_P $ define its index, $ ind(v) $, as the number of incident
edges that point towards $ v $. Suppose $\mathcal{V}(Q_P)$ is the set of all vertices and $\mathcal{E}(Q_P)$ is the set
of edges of $Q_P$. For each $j \in \{ 1, 2, \ldots, n\} $, let

$$I_j = \{(v,e_v) \in \mathcal{V}(Q_P) \times \mathcal{E}(Q_P) : ind(v)=j ~ and~  e_v ~ is ~the~ incident ~edge~ that ~ points$$
$$ towards~  v~ such~ that~  e_v = e \cap Q_P ~ for ~ an ~ edge ~  e ~of ~ P\}.$$
Suppose $(v, e_v) \in I_j$. Let $F_{e_v} \subset Q_P$ denote the smallest face which contains the inward pointing
edges incident to $v$. Then $F_{e_v}$ is a unique face not contained in any $P_{H_i}$.
Let $U_{e_v}$ be the open subset of $F_{e_v}$ obtained by deleting all faces of $F_{e_v}$ not containing the edge $e_v$.
The restriction of the equivalence relation $\sim$ on $ (\TT^{n-1}\times U_{e_v})$ gives that the quotient space
$ (\TT^{n-1}\times U_{e_v})/\sim $
is homeomorphic to the open disk $B^{2j-1}$. Hence the quotient space $(W/\partial W)$ has a $CW$-complex structure with
odd dimensional cells and one zero dimensional cell only. The number of $(2j-1)$-dimensional cell is $|I_j|$, the cardinality of
$I_j$ for $j=1, 2, \ldots, n$. So we get the following theorem.

\begin{theorem}
$
H_i(W, \partial W) = \left\{ \begin{array}{ll} \displaystyle \bigoplus_{|I_j|} \ZZ & \mbox{if} ~ i = 2j-1~
\mbox{and} ~ j \in \{1, \ldots, n\} \\
 \ZZ & \mbox{if}~ i = 0 \\
 0 & \mbox{otherwise}
\end{array} \right.
$
\end{theorem}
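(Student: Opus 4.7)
The plan is to compute $\widetilde H_*(W/\partial W)$ by equipping $W/\partial W$ with a CW-structure in which every positive-dimensional cell has odd dimension, so that the cellular homology reduces to a cell count. Since $\partial W$ has a collar in $W$, the identification $H_i(W,\partial W) \cong \widetilde H_i(W/\partial W)$ provided by Proposition 2.22 of \cite{Hat} is already noted just before the theorem, so this reduction is legitimate.

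To produce the cells I would follow the Morse-theoretic strategy used by Davis--Januszkiewicz in the proof of their Theorem 3.1. Pick a generic linear functional $\phi \colon \RR^n \to \RR$ separating the vertices of $Q_P$, orient every edge so that $\phi$ increases along it, and set $\mathrm{ind}(v)$ equal to the number of inward-pointing edges at $v$. For each $(v, e_v) \in I_j$ let $F_{e_v}$ be the smallest face of $Q_P$ containing all inward edges at $v$; by construction $\dim F_{e_v} = j$, and $F_{e_v}$ is not contained in any cutting hyperplane $P_{H_i}$. With $U_{e_v} \subset F_{e_v}$ the open subset obtained by deleting every face of $F_{e_v}$ not containing $e_v$, I would declare the candidate open cell to be
\[
C_{e_v} := (\TT^{n-1} \times U_{e_v})/\!\sim \;\subset\; W/\partial W.
\]
The key local claim is that $C_{e_v}$ is homeomorphic to an open $(2j-1)$-ball: on the relative interior of $F_{e_v}$ the isotropy torus has rank $n-j$, so a generic $\TT^{n-1}$-orbit is $(j-1)$-dimensional, while the isotropy grows to all of $\TT^{n-1}$ along the edge $e_v$, collapsing orbits to points exactly as in the local chart $I^0 \times \RR_{\vargeq 0}^{n-1}$ constructed in Section \ref{def}. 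A direct computation in that chart should produce the required open disk.

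With these cells in place, the collapsed image $[\partial W]$ serves as the unique $0$-cell, each $(v, e_v) \in I_j$ contributes an open cell of dimension $2j-1$, and ordering the vertices by $\phi$-value guarantees that every attaching map lands in the union of cells coming from vertices of strictly smaller $\phi$-value. Since there are no cells in positive even dimensions, each cellular boundary $\partial_{2j-1}\colon C_{2j-1} \to C_{2j-2} = 0$ vanishes automatically for $j > 1$, and $\partial_1 \colon C_1 \to C_0$ vanishes as well because for every $(v, e_v) \in I_1$ both endpoints of $e_v$ are vertices of $Q_P$ sitting on some $P_{H_i} \subset \partial W$, and these collapse to the basepoint. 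The cellular chain complex therefore has trivial differentials, and the stated formula follows by counting cells in each odd dimension.

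The step I expect to be the main obstacle is the local identification $C_{e_v} \cong B^{2j-1}$. This requires a careful analysis of which subtorus of $\TT^{n-1}$ acts trivially on each stratum of $F_{e_v}$, and a verification that the resulting collapse of torus fibers produces an honest open disk rather than a more singular quotient. The needed computation is essentially a localization to $F_{e_v}$ of the orbit-space argument already carried out in Section \ref{def} for $(\TT^{n-1} \times I^0 \times \RR_{\vargeq 0}^{n-1})/\!\sim_e$; once that localization is justified, the CW-structure and the homology read-off follow in a routine manner.
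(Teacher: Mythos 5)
Your proposal follows the same Morse-theoretic strategy the paper uses: choose a generic linear functional $\phi$ on $Q_P$, index vertices by the number of inward-pointing edges, assign to each $(v,e_v)\in I_j$ the open set $U_{e_v}\subset F_{e_v}$, and verify that $(\TT^{n-1}\times U_{e_v})/\!\sim$ is an open $(2j-1)$-cell, so that the resulting CW-structure on $W/\partial W$ has only odd-dimensional positive cells and the cellular differentials vanish. This is exactly the paper's argument (which itself adapts the proof of Theorem 3.1 of Davis--Januszkiewicz); the step you flag as the potential obstacle, the local identification of $(\TT^{n-1}\times U_{e_v})/\!\sim$ with $B^{2j-1}$, is asserted at the same level of detail in the paper, so you are not missing anything the paper supplies.
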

When $j=n$ the cardinality of $I_j$ is one. So $H_{2n-1}(W, \partial W) = \ZZ$. Hence  $W$ is an orientable
manifold with boundary.

\begin{example} We adhere the notations of Example \ref{egchar2}. Observe that $I_3 = \{(v_{14}, e_{v_{14}})\}$,
$I_2 = \{(v_8, e_{v_8}), (v_{13}, e_{v_{13}}), (v_{15}, e_{v_{15}})\}$ and $I_1 = \{(v_3, e_{v_3}), (v_6, e_{v_6}), (v_9, e_{v_9})\}$.
The face $F_{e_{v_{13}}}$ corresponding to the point $ (v_{13}, e_{v_{13}})$ is $v_0v_3v_5v_{13}v_{12}v_1$.
Thus we can give a $CW$-structure of $W(Q_{P_0}, \lambda)/ \partial{W(Q_{P_0}, \lambda)}$ with one $0$-cell,
two $1$-cells, three $3$-cells and one $5$-cell.
 \begin{figure}[ht]
\centerline{
\scalebox{.97}{
\input{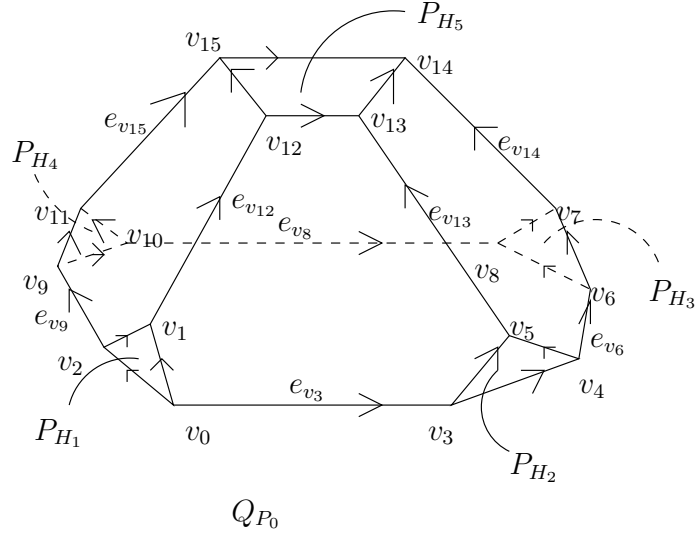}
 }
 }
\caption {The index function of $Q_{P_0}$.}
\label{egoricob}
\end{figure}
\end{example}

In \cite{DJ} the authors showed that the odd dimensional homology of quasitoric
manifolds are zero. So $H_{2i-1} (\partial W) = 0$ for all $i$. Hence we get the following exact sequences for
the collared pair $(W, \partial{W})$.

\begin{equation}
\begin{CD}
0 \to H_{2n-1}(W) @>j_{\ast}>> H_{2n-1}(W, \partial W)@>\partial>> H_{2n-2}(\partial{W}) @>i_{\ast}>> H_{2n-2}(W)\to 0 \\
\vdots @. \vdots @. \vdots @. \vdots \\
0 \to H_{3}(W) @>j_{\ast}>> H_{3}(W, \partial W)@>\partial>> H_{2}(\partial{W}) @>i_{\ast}>> H_{2}(W)\to 0 \\
0 \to H_{1}(W) @>j_{\ast}>> H_{1}(W, \partial W)@>\partial>> H_{0}(\partial{W}) @>i_{\ast}>> H_{0}(W) \twoheadrightarrow \ZZ
\end{CD}
\end{equation}

Where $\ZZ \cong H_{0}(W, \partial W)$. Let $ (h_{i_0},\ldots, h_{i_{n-1}})$ be the $h$-$vector$ of $P_{H_i}$, for $i = 1, 2, \ldots, k$.
The definition of $h$-$vector$ of simple convex polytope is given in \cite{DJ}.
Hence the Euler characteristic of the manifold $W$ with quasitoric boundary is
$\Sigma_{i=1}^k \Sigma_{j=0}^{n-1} {h_{i_j}} - \Sigma_{j=1}^{n-1} {|I_j|}$.

Fix the standard orientation on $\TT^{n-1}$. Let $I_n = \{(v^{\prime}, e_{v^{\prime}})\}$. Then the $(2n-1)$-dimensional cell
$ (\TT^{n-1}\times U_{e_{v^{\prime}}})/\sim \subset W$ represents a fundamental class of $W/ \partial{W}$ with coefficient in $\ZZ$.
Thus an orientation of $U_{e_{v^{\prime}}}$ (hence of $Q_P$) determines an orientation of $W$. Note that an orientation
of $Q_P$ is induced by orienting the ambient space $\RR^n$.

So the boundary orientation on $P_{H_i}$ induced from the
orientation of $Q_P$ gives the orientation on the quasitoric manifold $W_i \subset \partial{W}$. In the next section
we consider the orientation of $Q$'s and $Q_P$'s induced from the standard orientation of their ambient spaces.

\section{Torus Cobordism of Quasitoric Manifolds}\label{cobort2}
Let $\mathfrak{C}$ be the following category: the objects are all quasitoric manifolds and morphisms are
torus equivariant maps between quasitoric manifolds. We are considering torus cobordism in this category only.

\begin{defn}
Two $2n$-dimensional quasitoric manifolds $M_1$ and $M_2$ are said to be $\mathbb{T}^n$-cobordant if there exists
an oriented $\mathbb{T}^n$ manifold $W$ with boundary $\partial W $ such that $\partial W $ is $\mathbb{T}^n$
equivariantly diffeomorphic to $ M_1 \sqcup (-M_2)$ under an
orientation preserving diffeomorphism. Here $-M_2$ represents the reverse orientation of $M_2$.
\end{defn}
We denote the $\TT^n$-cobordism class of quasitoric $2n$-manifold $M$ by $[M]$.
\begin{defn}
 The $n$-th torus cobordism group is the group of all cobordism classes of $2n$-dimensional quasitoric manifolds
with the operation of disjoint union. We denote this group by $CG_n$.
\end{defn}

Let $M \to Q$ be a $4$-dimensional quasitoric manifold over the square $Q$ with
the characteristic function $\eta : \mathcal{F}(Q) \to \ZZ^2/\ZZ_2$.
We construct an oriented $\TT^2$ manifold $W$ with boundary $\partial W $, where $\partial W $ is equivariantly
homeomorphic to either $ -M \sqcup \sqcup_{k_1} \CP^2 \sqcup \sqcup_{k_2} \bar{\CP^2}$ or
$ M \sqcup \sqcup_{k_1} \CP^2 \sqcup \sqcup_{k_2} \bar{\CP^2}$ for some integer $k_1, k_2$.
To show this we construct a $3$-dimensional edge-simple polytope $P_{\mathcal{E}}$ such that
$P_{\mathcal{E}}$ has exactly one vertex $O$ which is the intersection of $4$ facets with
$P_{\mathcal{E}} \cap H_O = Q $ and other vertices of $P_{\mathcal{E}}$ are intersection of $3$ facets.
We define an isotropy function $\lambda$, extending the characteristic function $\eta$ of $M$,
from the set of facets of $P_{\mathcal{E}}$ to $\ZZ^2/\ZZ_2$.
Then $W(Q_{P_{\mathcal{E}}}, \lambda)$ is the required oriented $\TT^2$ manifold with quasitoric boundary.
We have done an explicit calculation in the following.

Let $Q = ABCD$ be a rectangle (see Figure \ref{eg2}) belongs to $\{(x,y,z) \in \RR^3_{\geq 0} : x+y+z=1\}$.
Let $\eta : \{ AB, BC, CD, DA \} \to \ZZ^2/\ZZ_2 $ be the characteristic function for a quasitoric
manifold $M$ over $ABCD$ such that the characteristic vectors are

$$ \eta(AB)= \eta_1, ~ \eta(BC)= \eta_2, ~ \eta(CD)= \eta_3 ~ \mbox{and} ~ \eta(DA)= \eta_4.$$

We may assume that $\eta_1 =(0,1)$ and $ \eta_2=(1,0)$.
From the classification results given in subsection \ref{defegm}, it is enough to consider the following cases only.

\begin{equation}\label{ca1}
 \eta_3 =(0,1) ~ \mbox{ and} ~ \eta_4 = (1,0)
\end{equation}

\begin{equation}\label{ca2}
\eta_3 =(0,1) ~ \mbox{ and} ~ \eta_4 = (1,k), ~ k= 1 ~\mbox{or} ~ -1
\end{equation}

\begin{equation}\label{ca3}
\eta_3 =(0,1) ~ \mbox{ and} ~ \eta_4 = (1,k), ~ k \in \ZZ -\{-1, 0, 1\}
\end{equation}

\begin{equation}\label{ca4}
\eta_3 =(-1,1) ~ \mbox{ and} ~ \eta_4 = (1,-2)
\end{equation}

\textbf{For the case} \ref{ca1}: In this case the edge-simple polytope $\widetilde{P}_1$, given in Figure \ref{eg2}, is the required
edge-simple polytope. The isotropy vectors of $\widetilde{P}_1$ are given by
\[
\lambda(OGH) = \eta_1, ~ \lambda(OHI) = \eta_2, ~ \lambda(OIJ) = \eta_3,
~ \lambda(OGJ) = \eta_4 ~ \mbox{and} ~ \lambda(GHIJ) = \eta_1 + \eta_2.
\]
So we get an oriented $\TT^2$ manifold $W(Q_{\widetilde{P}_1}, \lambda)$ with quasitoric boundary where the boundary is the quasitoric
manifold $-M \sqcup \sqcup_{k_1} \CP^2 \sqcup \sqcup_{k_2} \bar{\CP^2}$ for some integers $k_1, k_2$.
Note that orientation on $\widetilde{P}_1 \subset \RR^3_{\ge 0}$ comes from the standard orientation of $\RR^3$.
Let $A^{\prime}$ and $B^{\prime}$ be the midpoints of $GJ$ and $HI$ respectively. Let $\mathcal{H}$ be the plane
passing through $O, A^{\prime}$ and $B^{\prime}$ in $\RR^3$.
Since a reflection in $\RR^3 $ is an orientation reversing homeomorphism, it is easy to observe that the reflection on $\mathcal{H}$
induces the following orientation reversing equivariant homeomorphisms.
\begin{equation}\label{hiz1}
 (\TT^2 \times \widetilde{P}_{1_I})/\sim \to (\TT^2 \times \widetilde{P}_{1_H})/\sim~
\mbox{and} ~(\TT^2 \times \widetilde{P}_{1_J})/\sim \to (\TT^2 \times \widetilde{P}_{1_G})/\sim.
\end{equation}
So $k_1 = k_2$. Since $[\bar{\CP^2}] = -[\CP^2]$, $ [M] = 0 [\CP^2]$. Identifying the corresponding boundaries
of $W(Q_{\widetilde{P}_1}, \lambda)$ via the equivariant homeomorphisms of equation \ref{hiz1} we get that $M$ is
the boundary of a nice oriented $\TT^2$ manifold. By 'nice manifold' we mean it has good CW-complex structures.

\textbf{For the case } \ref{ca2}: In this case $\lvert det(\eta_2,\eta_4) \rvert =1 $.
Let $O$ be the origin of $\RR^3$. Let $C_Q$ be the open cone on rectangle $ABCD$ at the origin $O$.
Let $G, H, I, J$ be points on extended $OA, OB, OC, OD$ respectively. Let $E$ and $F$ be two points in the interior of the open cones on
$AB$ and $CD$ at $O$ respectively such that $\lvert OG\rvert < \lvert OE \rvert$, $\lvert OH \rvert < \lvert OE \rvert$ and
$\lvert OI\rvert < \lvert OF \rvert$, $\lvert OJ \rvert < \lvert OF \rvert$. May assume that $OH = OI$, $OG=OJ$, $HE=EG$ and $IF = FJ$.
Then the convex polytope $P_1 \subset C_Q$ on the set of vertices $\{ O, G, E, H, I, F, J \}$ is an edge-simple polytope
(see Figure \ref{eg2}) of dimension $3$. Define a function, denote by $\lambda$, on the set of facets of $P_1$ by

\begin{equation}
 \begin{array}{ll}  \lambda(OGEH) = \eta_1, ~ \lambda(OHI) = \eta_2, ~ \lambda(OJFI) = \eta_3, ~ \lambda(OJG) = \eta_4,\\
 \lambda(HIFE) = \eta_4 ~ \mbox{and} ~ \lambda(GJFE) = \eta_2.
\end{array}
\end{equation}

Hence $\lambda$ is an isotropy function on the edge-simple polytope $P_1$. The boundary of the oriented
$\TT^2$ manifold $ W(Q_{P_1}, \lambda)$ is the quasitoric manifold $-M \sqcup \sqcup_{k_1}\CP^2 \sqcup \sqcup_{k_2} \bar{\CP^2}$
for some integers $k_1, k_2$. Similarly to the previous case we can show that suitable reflections
induce the following orientation reversing equivariant homeomorphisms.
\begin{equation}\label{hiz2}
 \begin{array}{ll} (\TT^2 \times P_{1_H})/\sim \to (\TT^2 \times P_{1_I})/\sim,
~ (\TT^2 \times P_{1_E})/\sim \to (\TT^2 \times P_{1_F})/\sim\\\\
\mbox{and}~ (\TT^2 \times P_{1_G})/\sim \to (\TT^2 \times P_{1_J})/\sim.
\end{array}
\end{equation}
So $k_1 = k_2$. Hence $ [M] = 0 [\CP^2]$. Identifying the corresponding boundaries of $ W(Q_{P_1}, \lambda)$
via the equivariant homeomorphisms of equation \ref{hiz2} we get that $M$ is
the boundary of a nice oriented $\TT^2$ manifold.

\begin{figure}[ht]
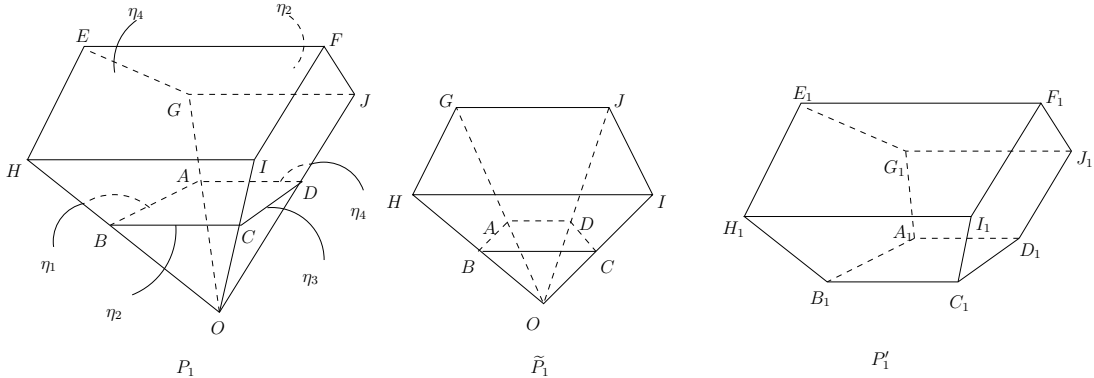

\centerline{
\scalebox{0.58}{
\input{eg2.pstex_t}
 }
\scalebox{0.58}{
\input{eg3.pstex_t}
 }
 }
\caption {The edge-simple polytope $P_1, \widetilde{P}_1$ and the convex polytope $P_1^{\prime}$ respectively.}
\label{eg2}
\end{figure}

\begin{figure}[ht]
        \centerline{
           \scalebox{0.63}{
            \input{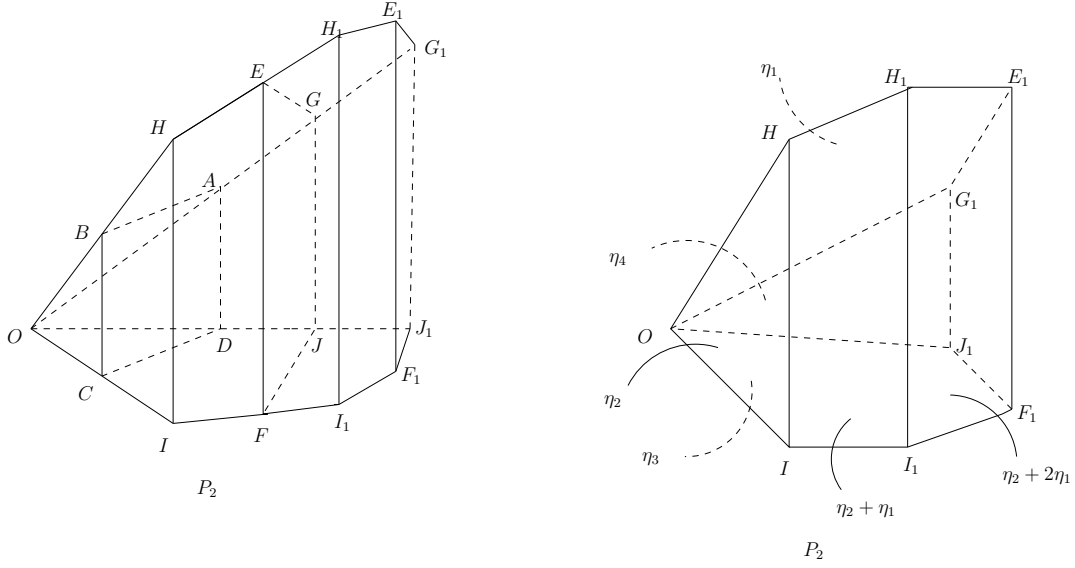}
            }
          }
       \caption {The edge-simple polytope $P_2$ with the function $\lambda^{(2)}$.}
        \label{eg4}
      \end{figure}

\textbf{For the case } \ref{ca3}: Suppose $ det(\eta_2,\eta_4) = k > 1 $.
Define a function $\lambda^{(1)} $ on the set of facets of $P_1$ except $GEFJ$ by

\begin{equation}
 \begin{array}{ll}  \lambda^{(1)}(OGEH) = \eta_1, ~ \lambda^{(1)}(OHI) = \eta_2, ~ \lambda^{(1)}(OIFJ) = \eta_3,
~ \lambda^{(1)}(OGJ) = \eta_4,\\
\mbox{and} ~ \lambda^{(1)}(EHIF) = \eta_2 + \eta_1.
\end{array}
\end{equation}

So the function $\lambda^{(1)}$ satisfies the condition of an isotropy function
of the edge-simple polytope $P_1$ along each edge except the edges of the rectangle $GEFJ$.
The restriction of the function $\lambda^{(1)}$
on the edges $GE, EF, FJ, GJ$ of the rectangle $GEFJ$ gives the following equations,

\begin{equation}
 \begin{array}{ll} \lvert det[\lambda^{(1)}(GE),\lambda^{(1)}(EF)] \rvert =1, ~
\lvert det[\lambda^{(1)}(EF),\lambda^{(1)}(FJ)]\rvert =1, \\
\lvert det[\lambda^{(1)}(FJ),\lambda^{(1)}(GJ)]\rvert =1, ~ \lvert det[\lambda^{(1)}(GJ),\lambda^{(1)}(GE)]\rvert =1\\
\mbox{and} ~  det[\lambda^{(1)}(EF),\lambda^{(1)}(GJ)] = k-1 < k.
\end{array}
\end{equation}

Let $ P_1^{\prime} $ be a $3$-dimensional convex polytope as in the Figure \ref{eg2}.
Identifying the facet $GEFJ$ of $P_1$ and $A_1B_1C_1D_1$ of $ P_1^{\prime} $ through
a suitable diffeomorphism of manifold with corners such that the vertices $ G, E, F, J$
maps to the vertices $A_1, B_1, C_1, D_1$ respectively, we can form a new convex polytope $P_2$, see Figure \ref{eg4}.
After the identification the following holds.
\begin{enumerate}
\item The facet of $P_1$ containing $GE$ and the facet of $ P_1^{\prime} $ containing $A_1B_1$ make the facet $OHH_1E_1G_1$ of $P_2$.

\item The facet of $P_1$ containing $EF$ and the facet of $ P_1^{\prime} $ containing $B_1C_1$ make the facet $HH_1I_1I$ of $P_2$.

\item The facet of $P_1$ containing $FJ$ and the facet of $ P_1^{\prime} $ containing $C_1D_1$ make the facet $OII_1F_1J_1$ of $P_2$.

\item The facet of $P_1$ containing $JG$ and the facet of $ P_1^{\prime} $ containing $D_1A_1$ make the facet $OJ_1G_1$ of $P_2$.
\end{enumerate}

The polytope $P_2$ is an edge-simple polytope. We define a function $\lambda^{(2)}$ on the set of facets
of $P_2$ except $G_1E_1F_1J_1$ by

\begin{equation}
 \begin{array}{ll} \lambda^{(2)}(OHH_1E_1G_1) = \eta_1, ~ \lambda^{(2)}(OIH) = \eta_2 , ~ \lambda^{(2)}(OII_1F_1J_1) = \eta_3,\\
\lambda^{(2)}(OJ_1G_1) = \eta_4, ~ \lambda^{(2)}(HH_1I_1I) = \eta_2 +\eta_1\\
\mbox{and} ~ \lambda^{(2)}(H_1I_1F_1E_1)= \eta_2 + 2\eta_1.
\end{array}
\end{equation}

So the function $\lambda^{(2)}$ satisfies the condition of an isotropy function of the edge-simple polytope $P_2$ along each edge
except the edges of the rectangle $G_1E_1F_1J_1$. The restriction of the function $\lambda^{(2)}$ on the edges
namely $G_1E_1, E_1F_1, F_1J_1, G_1J_1$ of the rectangle $G_1E_1F_1J_1$ gives the following equations,

\begin{equation}
 \begin{array}{ll}
\lvert det[\lambda^2(G_1E_1),\lambda^2(E_1F_1)]\rvert =1, ~ \lvert det[\lambda^2(E_1F_1),\lambda^2(F_1J_1)]\rvert =1,\\
\lvert det[\lambda^2(F_1J_1),\lambda^2(G_1J_1)]\rvert =1, ~ \lvert det[\lambda^2(G_1J_1),\lambda^2(G_1E_1)]\rvert =1\\
\mbox{and} ~ det[\lambda^2(E_1F_1), \lambda^2(G_1J_1)] = k-2 < k-1.
\end{array}
\end{equation}

Proceeding in this way, at $k$-th step we construct an edge-simple polytope $P_k$ with the function $\lambda^{(k)}$,
extending the function $\lambda^{(k-1)}$, on the set of facets of $P_k$ such that
\begin{equation}
 \begin{array}{ll} \lambda^{(k)}(H_{k-2}H_{k-1}I_{k-1}I_{k-2}) = \eta_2 + (k-1)\eta_1 = \lambda^{(k-1)}(H_{k-2}I_{k-2}F_{k-2}E_{k-2}),\\
\lambda^{(k)}(OG_{k-1}J_{k-1}) = \eta_4 = \lambda^{(k-1)}(OG_{k-2}J_{k-2}),\\
\lambda^{(k)}(H_{k-1}I_{k-1}F_{k-1}E_{k-1})= \eta_4 ~ \mbox{and} ~ \lambda^{(k)}(G_{k-1}E_{k-1}F_{k-1}J_{k-1})= \eta_2 + (k-1) \eta_1.
\end{array}
\end{equation}
Observe that the function $\lambda := \lambda^{(k)}$ is an isotropy function of the edge-simple polytope $P_k$.
So we get an oriented $\TT^2$-manifold with boundary $W(Q_{P_k}, \lambda)$ where the boundary is the quasitoric manifold
$-M \sqcup \sqcup_{k_1} \CP^2 \sqcup \sqcup_{k_2} \bar{\CP^2}$ for some integers $k_1, k_2$. 
Similarly to the previous cases we can construct the following orientation reversing equivariant homeomorphisms.
\begin{equation}\label{hiz3}
 \begin{array}{ll} (\TT^2 \times P_{k_H})/\sim \to (\TT^2 \times P_{k_I})/\sim, ~
(\TT^2 \times P_{1_{G_{k-1}}})/\sim \to (\TT^2 \times P_{1_{J_{k-1}}})/\sim,\\\\
(\TT^2 \times P_{k_{E_{k-1}}})/\sim = (\TT^2 \times P_{k_{F_{k-1}}})/\sim~ \mbox{and}~
(\TT^2 \times P_{k_{H_i}})/\sim \to (\TT^2 \times P_{k_{I_i}})/\sim 
\end{array}
\end{equation}
for $i= 1, \ldots, k-1$. So $k_1 = k_2$. Hence $ [M] = 0 [\CP^2]$.
 Identifying the corresponding boundaries of $ W(Q_{P_k}, \lambda)$
via the equivariant homeomorphisms of equation \ref{hiz3} we get that $M$ is
the boundary of a nice oriented $\TT^2$ manifold.

If $k < -1$, similarly we can show $ [M] = 0 [\CP^2]$ and we can construct nice 
oriented $\TT^2$ manifold with boundary $W$ where the boundary is $M$.

Hence given a Hirzebruch surface $M$ with natural $\TT^2$ action we construct a nice 
$5$-dimensional oriented $\TT^2$ manifold with boundary where the boundary is $M$.
Thus we get the following interesting lemma.
\begin{lemma}\label{hizb}
The $\TT^2$-cobordism class of a Hirzebruch surface is trivial. In particular, oriented
cobordism class of a Hirzebruch surface is also trivial.
\end{lemma}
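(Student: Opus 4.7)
The plan is to realize each Hirzebruch surface $M_k^4$ as the boundary of a nice oriented $\TT^2$-manifold built by the edge-simple polytope construction of Section \ref{def}. By Example \ref{square} the Hirzebruch surface $M_k^4$ is the quasitoric manifold over a square with characteristic data $\eta_1=(0,1)$, $\eta_2=(1,0)$, $\eta_3=(0,1)$, $\eta_4=(1,k)$. So it suffices to construct, for every $k\in\ZZ$, a $3$-dimensional edge-simple polytope $P_\star$ (with a distinguished $4$-valent vertex $O$ satisfying $P_\star\cap H_O=ABCD$) and an isotropy function $\lambda:\mathcal{F}(P_\star)\to\ZZ^2/\ZZ_2$ extending $\eta$, so that the quasitoric boundary of the oriented $\TT^2$-manifold $W(Q_{P_\star},\lambda)$ is $-M_k^4\sqcup\bigsqcup\CP^2\sqcup\bigsqcup\overline{\CP^2}$, and so that the remaining components can be paired off and identified in orientation-reversing equivariant pairs.

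First I would split into cases by the value of $k$. When $k=0$ we have $M_0^4\cong\CP^1\times\CP^1$ and a straight cone $\widetilde P_1$ over the rectangle, capped off by the facet $GHIJ$ with $\lambda(GHIJ)=\eta_1+\eta_2$, gives a single edge-simple polytope that already does the job. When $k=\pm1$ one has $|\det(\eta_2,\eta_4)|=1$, and a single cap polytope $P_1$ (adding vertices $E,F$ above and below the rectangle) carries a compatible isotropy function $\lambda$ with triangular and quadrilateral facets labelled $\eta_1,\eta_2,\eta_3,\eta_4$ around $O$ and $\eta_4,\eta_2$ on the upper and lower quads. For $|k|\ge 2$ one must iterate: since the determinantal obstruction is $|k|>1$, define $\lambda^{(1)}$ on all facets of $P_1$ except the top rectangle $GEFJ$, observe that $\det[\lambda^{(1)}(EF),\lambda^{(1)}(GJ)]=k-1$, glue a second cap $P_1'$ along this rectangle to form $P_2$, and extend by $\lambda^{(2)}$ assigning $\eta_2+2\eta_1$ to the new top quad. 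Repeating $|k|$ times produces an edge-simple polytope $P_k$ with a well-defined isotropy function $\lambda=\lambda^{(k)}$, the final top facet being labelled $\eta_2+(k-1)\eta_1$ and $\eta_4$, which satisfy the $\ZZ$-basis condition since $\det=1$.

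Next, for each polytope $P_\star$ I would inspect the facets $P_{\star,H_i}$ corresponding to its non-$O$ vertices. The restrictions of $\lambda$ to these triangular or rectangular facets are exactly the characteristic functions of $\CP^2$, $\overline{\CP^2}$, or $\CP^1\times\CP^1$ listed in Examples \ref{triangle} and \ref{square}, so by the construction in subsection \ref{defqb} the boundary of $W(Q_{P_\star},\lambda)$ is the disjoint union of $-M$ with a collection of such components. I would then use the left--right/top--bottom reflection symmetries of $P_\star$ inside $\RR^3$: because any reflection of $\RR^3$ is orientation-reversing and is compatible with the symmetric placement of the isotropy vectors, it induces an orientation-reversing equivariant homeomorphism between paired boundary components, as in equations \eqref{hiz1}, \eqref{hiz2}, \eqref{hiz3}. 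Gluing these pairs together produces a new oriented $\TT^2$-manifold whose only remaining boundary component is $M_k^4$ itself, giving $[M_k^4]=0$ in the $\TT^2$-cobordism group. Forgetting the $\TT^2$-action yields the second assertion.

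The main obstacle is the case $|k|\ge 2$: one must certify that the inductive extension $\lambda^{(j)}\mapsto\lambda^{(j+1)}$ genuinely defines an isotropy function at every stage (checking, edge by edge, that the two labels adjacent to each edge form a $\ZZ$-basis, including at the newly created edges of the intermediate rectangles $G_jE_jF_jJ_j$), that the glued polytope $P_k$ remains edge-simple, and that the collection of reflection symmetries survives the gluing so that all spurious $\CP^2$ and $\overline{\CP^2}$ components can still be matched in orientation-reversing pairs. Once this bookkeeping is in place, the termination of the induction at the step where the outstanding determinant drops to $1$ reduces the problem to the already-handled case $k=\pm1$, and the lemma follows.
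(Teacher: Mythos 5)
Your proposal reproduces the paper's argument essentially verbatim: the paper's cases \ref{ca1}, \ref{ca2}, \ref{ca3} are exactly your $k=0$, $k=\pm1$, $|k|\geq 2$ subcases, built over the same cone polytope $\widetilde P_1$, the same capped polytope $P_1$, and the same inductively glued $P_k$ with the determinant dropping by one at each gluing, followed by the same reflection argument pairing off the spurious $\CP^2$ and $\bar{\CP^2}$ boundary components. The approach and all key steps agree with the paper's proof.
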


\begin{figure}[ht]
        \centerline{
           \scalebox{0.67}{
            \input{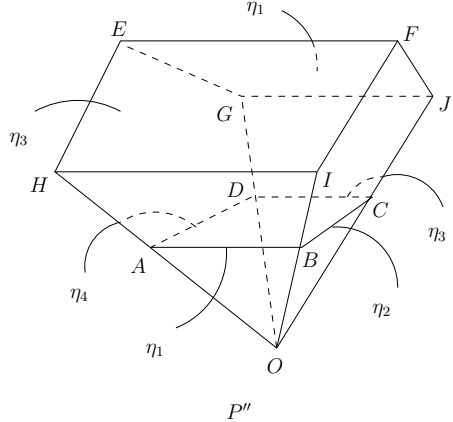}
            }
          }
       \caption {The edge-simple polytope $P^{\prime\prime}$ and an isotropy function $\lambda$ associated to the case \ref{ca4}.}
        \label{egca}
      \end{figure}

\textbf{For the case } \ref{ca4}: In this case $\lvert det[\eta_1,\eta_3] \rvert =1 $. Following case \ref{ca2},
we can construct an edge simple polytope $P^{\prime\prime}$ and an isotropy function $\lambda$ over this
edge-simple polytope, see Figure \ref{egca}.
Hence we can construct an oriented $\TT^2$ manifold with quasitoric boundary $W(Q_{P^{\prime \prime}}, \lambda)$
where the boundary is $-M \sqcup \sqcup_{k_1} \CP^2 \sqcup \sqcup_{k_2} \bar{\CP^2}$ for some integers $k_1, k_2$.
We may assume that 'the angles between the planes $OHI$ and $HIFE$' and 'the angles between the planes $EFJG$ and $HIFE$'
are equal. Clearly a suitable reflection induces the following orientation reversing equivariant homeomorphisms.
\begin{equation}
 (\TT^2 \times P^{\prime \prime}_H)/\sim \to (\TT^2 \times P^{\prime \prime}_E)/\sim~
\mbox{and} ~(\TT^2 \times P^{\prime \prime}_I)/\sim \to (\TT^2 \times P^{\prime \prime}_F)/\sim.
\end{equation}

Let $\CP^2_J = (\TT^2 \times P_{J}^{\prime \prime})/\sim$ and $\CP^2_G = (\TT^2 \times P_{G}^{\prime \prime})/\sim$.
Observe that the characteristic functions of the triangles $P_{J}^{\prime \prime}$ and $P_{G}^{\prime \prime}$
are differ by a non-trivial automorphism of $\TT^2$ (or $\ZZ^2$). So $\CP^2_J$ and $\CP^2_G$ are complex
projective space $\CP^2$ with two non-equivariant $\TT^2$-actions. Hence $ [M] = [\CP^2_J] + [\CP^2_G]$.

To compute the group $CG_2$ we use the induction on the number of facets of $2$-dimensional simple convex
polytope in $\RR^2$.  We rewrite the proof of well-known following lemma briefly.
\begin{lemma}\label{coblem}
The equivariant connected sum of two quasitoric manifolds is equivariantly cobordant to the disjoint union
of these two quasitoric manifolds.
\end{lemma}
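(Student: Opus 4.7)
The plan is to exhibit an oriented $\TT^n$-manifold $W$ with boundary equivariantly homeomorphic to $(M_1 \sqcup M_2) \sqcup (-(M_1 \# M_2))$, constructed as the trace of the equivariant surgery that produces the connected sum.

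First, I would set up the local connected-sum model. Because the $\TT^n$-action on each $M_i$ is locally standard, the fixed point $x_i \in M_i$ lying over the vertex $v_i$ of $P_i$ at which the equivariant connected sum is performed has a $\TT^n$-invariant closed ball neighborhood $B_i \subset M_i$ equivariantly homeomorphic to an invariant ball in $\CC^n$ with the standard $\TT^n$-action. By Proposition \ref{probi}, the existence of the equivariant connected sum is equivalent to matching the characteristic vectors at $v_1$ and $v_2$ up to a $\delta$-translation, which supplies an equivariant orientation-reversing homeomorphism $\varphi\colon \partial B_1 \to \partial B_2$ such that $M_1 \# M_2 = (M_1 \setminus \mathrm{int}\,B_1) \cup_\varphi (M_2 \setminus \mathrm{int}\,B_2)$.

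Next, I would build the cobordism by attaching a $\TT^n$-equivariant $1$-handle. Take $V = (M_1 \sqcup M_2) \times [0,1]$, the trivial oriented $\TT^n$-cobordism from $M_1 \sqcup M_2$ to itself. Attach the equivariant $1$-handle $H = [0,1] \times D^{2n}$ — with $\TT^n$ acting trivially on $[0,1]$ and standardly on $D^{2n} \subset \CC^n$ — by equivariantly gluing $\{0\} \times D^{2n}$ to $B_1 \times \{1\}$ and $\{1\} \times D^{2n}$ to $B_2 \times \{1\}$, with the identification of the attaching spheres chosen to agree with $\varphi$. The resulting $W = V \cup H$ is an oriented $\TT^n$-manifold. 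Its bottom boundary is $(M_1 \sqcup M_2) \times \{0\}$ with the given orientation, while the top boundary is the surgered manifold
\[
(M_1 \setminus \mathrm{int}\,B_1) \cup (D^1 \times S^{2n-1}) \cup (M_2 \setminus \mathrm{int}\,B_2) \cong M_1 \# M_2,
\]
carrying the orientation opposite to the connected-sum orientation (because of the collar direction). In the framework of the present paper, the same $W$ can be realized as $W(Q_P, \lambda)$ for the $(n+1)$-dimensional edge-simple polytope $P$ obtained from $P_1 \times I$ and $P_2 \times I$ by joining the vertices $(v_1, 1)$ and $(v_2, 1)$ by a short edge, equipped with the isotropy function $\lambda$ whose restrictions recover the characteristic functions of $M_1$ and $M_2$; the three vertices of $P$ that are cut off produce the three boundary components $M_1$, $M_2$, and $M_1 \# M_2$ of $W$.

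The main obstacle is the orientation bookkeeping at the handle attachment: one must check that the standard orientation on $D^{2n} = \CC^n$ combined with the interval orientation on $[0,1]$ and the collar orientation on $\partial V$ yields $-(M_1 \# M_2)$, rather than $M_1 \# M_2$, on the top boundary. Once this is verified, $\partial W = (M_1 \sqcup M_2) \sqcup (-(M_1 \# M_2))$ as oriented $\TT^n$-manifolds, which is the claimed equivariant cobordism.
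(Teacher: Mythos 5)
Your construction — take the trivial cobordism $(M_1 \sqcup M_2)\times[0,1]$ and attach an equivariant $1$-handle $[0,1]\times D^{2n}$ joining invariant ball neighborhoods of the two fixed points over $v_1$ and $v_2$ — is exactly what the paper does, phrased in handle-attachment language rather than the paper's "remove invariant half-ball neighborhoods $U_i$ of $1\times x_i$ in $W_i=[0,1]\times M_i$ and identify $\partial U_1$ with $\partial U_2$"; boundary connected sum of the two cylinders is the same space as your $V\cup H$. Both treat the orientation check at the same level of rigor, and your extra observation realizing $W$ as a $W(Q_P,\lambda)$ for a suitable edge-simple $P$ is a nice addendum consistent with the paper's machinery but not needed for the lemma.
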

\begin{proof}
 Let $M_1$ and $M_2$ be two quasitoric manifolds of dimension $2n$. 
 Then $W_1 := [0,1] \times M_1$ and $W_2 := [0,1] \times M_2$ are oriented $\TT^n$-manifolds with boundary such that
$$\partial{W_1}= 0 \times (-M_1) \sqcup 1 \times M_1 ~ \mbox{and} ~ \partial{W_2}= 0 \times (-M_2) \sqcup 1 \times M_2.$$
 Let $x_1 \in M_1$ and $x_2 \in M_2$ be two fixed points. Let $ U_1 \subset W_1$ and
$ U_2 \subset W_2$ be two $\TT^n$ invariant open neighborhoods of $1\times x_1$ and $1 \times x_2$ respectively.
Identifying $\partial{U_1} \subset (W_1 - U_1)$ and $\partial{U_2} \subset (W_2 - U_2)$ via a suitable orientation
preserving equivariant homeomorphism we get the lemma.
\end{proof}

Now consider the case of a quasitoric manifold $M$ over a convex $2$-polytope $Q$ with $m$ facets, where $m > 4$.
By the classification result of $4$-dimensional quasitoric manifold which is discussed in Remark \ref{clasq},
$M$ is one of the following equivariant connected sum.
\begin{equation}
 M = N_1 \# \CP^2
\end{equation}
\begin{equation}
 M = N_2 \# \bar{\CP^2}
\end{equation}
\begin{equation}
 M = N_3 \# M_k^4
\end{equation}
The quasitoric manifolds $N_1, N_2$ and $N_3$ are associated to the $2$-polytopes $Q_1, Q_2$ and $Q_3$ respectively.
The number of facets of $Q_1, Q_2$ and $Q_3$ are $m-1,~ m-1$ and $m-2$ respectively.
The quasitoric manifold $M_k^4$ is defined in subsection \ref{defegm}. In previous calculations we
have shown that $[M_k^4] = 0 [\CP^2]$. So by the Lemma \ref{coblem} we get
either $[M] = [N_1] + [\CP^2]$ or $[M] = [N_2] - [\CP^2]$ or $[M] = [N_3] $. Thus using
the induction on $m$, the number of facets of $Q$, we can prove the following.

\begin{lemma}\label{coborlm}
Any $4$-dimensional quasitoric manifold is equivariantly cobordant to some $\TT^2$-cobordism classes of $\CP^2$.
\end{lemma}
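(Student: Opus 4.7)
The plan is to induct on $m$, the number of facets (edges) of the $2$-dimensional simple polytope $Q$ over which the given quasitoric manifold $M \to Q$ lies. Since $Q$ is simple and $2$-dimensional, $m \geq 3$, and the preceding analysis has already dispensed with the cases $m = 3$ and $m = 4$: when $Q$ is a triangle, Example \ref{triangle} together with Remark \ref{equiclas} shows that $M$ is $\CP^2_s$ or $\bar{\CP^2}_s$, so $[M] = \pm[\CP^2_s]$; when $Q$ is combinatorially a square, the four cases \ref{ca1}--\ref{ca4} show that either $[M] = 0\,[\CP^2]$ (cases \ref{ca1}--\ref{ca3}, using Lemma \ref{hizb} implicitly through the reflection arguments) or $[M] = [\CP^2_J] + [\CP^2_G]$ (case \ref{ca4}), a sum of two $\TT^2$-cobordism classes of $\CP^2$ with possibly inequivalent actions.

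For the inductive step, assume $m \geq 5$ and that the lemma holds for all quasitoric $4$-manifolds whose orbit polytope has fewer than $m$ facets. Apply the classification of Remark \ref{clasq}: the quasitoric manifold $M$ is realized as an equivariant connected sum
\begin{equation*}
M \;=\; N \# R,
\end{equation*}
where $R$ is one of $\CP^2$, $\bar{\CP^2}$, or a Hirzebruch surface $M_k^4$, and $N$ is a quasitoric manifold over a $2$-polytope with strictly fewer facets than $Q$ (one fewer when $R = \CP^2$ or $\bar{\CP^2}$, coming from a vertex blow-down, and two fewer when $R = M_k^4$). By Lemma \ref{coblem} applied to this decomposition,
\begin{equation*}
[M] \;=\; [N] + [R]
\end{equation*}
in $CG_2$. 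The inductive hypothesis expresses $[N]$ as an integer combination of $\TT^2$-cobordism classes of $\CP^2$. For $[R]$: if $R$ is $\CP^2$ or $\bar{\CP^2}$ with its relevant action, then $[R]$ is $\pm$ a $\TT^2$-cobordism class of $\CP^2$ directly; if $R = M_k^4$, then Lemma \ref{hizb} gives $[R] = 0$. In every subcase $[M]$ lies in the subgroup generated by $\TT^2$-cobordism classes of $\CP^2$, completing the induction.

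The main obstacle, which is essentially what Lemma \ref{hizb} was proved to overcome, is the Hirzebruch summand: a priori an equivariant connected sum with $M_k^4$ could obstruct expressing $[M]$ in terms of $\CP^2$-classes alone, because Hirzebruch surfaces are genuinely different quasitoric manifolds (not themselves equivariant connected sums of $\CP^2$'s). Lemma \ref{hizb} is exactly what lets us discard these summands in cobordism. A secondary point requiring care is verifying that the connected-sum decomposition in Remark \ref{clasq} genuinely lowers the facet count of the orbit polytope in the way claimed, so that the induction is well-founded; this follows from the fact that the equivariant connected sum at a fixed point corresponds on the orbit side to a vertex blow-up (replacing a vertex of the polytope by a new facet), and we can therefore always undo such a blow-up whenever $m \geq 5$ by appealing to the classification.
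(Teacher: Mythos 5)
Your proposal is correct and follows essentially the same route as the paper: induct on the number $m$ of facets of the orbit polytope, treat the triangle and square cases as the base ($m=3,4$, the latter handled by cases \ref{ca1}--\ref{ca4} and culminating in Lemma \ref{hizb}), and for $m > 4$ invoke Remark \ref{clasq} to write $M$ as an equivariant connected sum, then apply Lemma \ref{coblem} to convert the connected sum into a disjoint union in $CG_2$ and Lemma \ref{hizb} to discard the Hirzebruch summand. Your additional remarks about the vertex blow-up interpretation and well-foundedness of the induction are correct commentary on what makes the facet count drop, matching the paper's unstated reasoning.
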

We classify the equivariant cobordism classes of all $\TT^2$-actions on $\CP^2$. Let $Q$ be a triangle and
$\{F_1, F_2, F_3\}$ be the edges (facets) of $Q$. Let $\eta : \{F_1, F_2, F_3\} \to \ZZ^2/\ZZ_2$ be a
characteristic function such that $\eta(F_1) = [(a_1, b_1)]$ and $\eta(F_2) = [(a_2, b_2)]$.
We may assume that
$$ det(\eta(F_1), \eta(F_2))= |(a_1, b_1; a_2, b_2)|=1$$
where $(a_1, b_1; a_2, b_2)$ is the $2 \times 2$ matrix in $SL(2, \ZZ)$ with row vectors $\eta(F_1) $ and $\eta(F_2)$.
We denote this matrix by $\eta$ also.
Then either $\eta(F_3) = [(a_1+a_2, b_1+b_2)]$ or $\eta(F_3) = [(a_1-a_2, b_1-b_2)]$. Let $\eta^{\prime}$
and $\eta^{\prime \prime}$ be two characteristic function defined respectively by,
$$\eta^{\prime}(F_1) = [(a_1, b_1)], \eta^{\prime}(F_2) = [(a_2, b_2)], \eta^{\prime}(F_3) = [(a_1+a_2, b_1 + b_2)]$$
and
$$\eta^{\prime \prime}(F_1) = [(a_1, b_1)], \eta^{\prime \prime}(F_2) = [(a_2, b_2)], \eta^{\prime \prime}(F_3) = [(a_1-a_2, b_1 - b_2)].$$
Denote the quasitoric manifolds associated to the pairs $(Q, \eta^{\prime})$ and $(Q, \eta^{\prime \prime})$ by 
$\CP^2_{\eta^{\prime}}$ and $\CP^2_{\eta^{\prime \prime}}$ respectively. Define an equivalence relation $\sim_{eq}$ on $SL(2, \ZZ)$ by 
$$(a_1, b_1; a_2, b_2) \sim_{eq} (-a_1, -b_1; -a_2, -b_2).$$
Denote the equivalence class of $\eta \in SL(2, \ZZ)$ by $[\eta]_{eq}$. Observe that if $[\eta_1]_{eq} \neq [\eta_2]_{eq}$
then the corresponding characteristic functions are differ by $\delta_{\ast}$, for some non trivial auto morphism
$\delta: \TT^2 \to \TT^2$. Using Lemma \ref{clema2} we get the following classification.
\begin{lemma}
 A $\TT^2$-actions on $\CP^2$ is equivariantly homeomorphic to either $\CP^2_{\eta^{\prime}}$ or $\CP^2_{\eta^{\prime \prime}}$
for a unique $[\eta]_{eq} \in SL(2, \ZZ)/ \sim_{eq}$.
\end{lemma}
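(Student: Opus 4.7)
The plan is to start from a $\TT^2$-action on $\CP^2$ belonging to the category $\mathfrak{C}$, read off the associated characteristic pair, and then show that after a normalization the data is captured exactly by an element of $SL(2,\ZZ)/\sim_{eq}$ together with a binary choice between $\eta^\prime$ and $\eta^{\prime\prime}$.

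First I would invoke the classification of $4$-dimensional quasitoric manifolds (Remark \ref{clasq}) to conclude that any quasitoric $\TT^2$-action on $\CP^2$ has orbit space a triangle $Q$ with facets $F_1,F_2,F_3$; otherwise the manifold would be an equivariant connected sum with more than one summand and could not be $\CP^2$ itself. Then by Lemma \ref{clema2} the action is determined, up to equivariant homeomorphism covering the identity on $Q$, by its characteristic function $\eta:\{F_1,F_2,F_3\}\to \ZZ^2/\ZZ_2$.

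Next I would carry out the elementary normalization. Because every two facets of the triangle are adjacent, $\{\eta(F_i),\eta(F_j)\}$ is a $\ZZ$-basis of $\ZZ^2$ for each pair $i\neq j$. Choose lifts $(a_i,b_i)\in\ZZ^2$ of $\eta(F_i)$; after flipping the sign of at most one lift I can arrange
\[
\det\begin{pmatrix}a_1 & b_1\\ a_2 & b_2\end{pmatrix}=1.
\]
Since $\{\eta(F_1),\eta(F_2)\}$ is a basis, write $\eta(F_3)=\alpha\,\eta(F_1)+\beta\,\eta(F_2)$ in $\ZZ^2$. The determinant conditions
\[
|\det(\eta(F_1),\eta(F_3))|=|\beta|=1,\qquad |\det(\eta(F_2),\eta(F_3))|=|\alpha|=1
\]
force $\alpha,\beta\in\{\pm 1\}$. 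Passing to $\ZZ^2/\ZZ_2$ collapses the four sign choices to the two cases
\[
\eta(F_3)=[\eta(F_1)+\eta(F_2)]\quad\text{or}\quad\eta(F_3)=[\eta(F_1)-\eta(F_2)],
\]
which are precisely the characteristic functions $\eta^\prime$ and $\eta^{\prime\prime}$ attached to the matrix $\eta=(a_1,b_1;a_2,b_2)\in SL(2,\ZZ)$. Applying Remark \ref{equiclas} (equivalently, Lemma \ref{clema2} with $\delta=\mathrm{id}$), the given $\TT^2$-action on $\CP^2$ is equivariantly homeomorphic to $\CP^2_{\eta^\prime}$ or $\CP^2_{\eta^{\prime\prime}}$.

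For uniqueness of the class $[\eta]_{eq}$, I would argue as follows. By Remark \ref{equiclas} the characteristic function $\eta$ of the action is uniquely determined as a function to $\ZZ^2/\ZZ_2$, so the unordered pairs $\{\pm(a_1,b_1)\}$ and $\{\pm(a_2,b_2)\}$ are intrinsic. The normalization $\det=1$ selects from the four lifts $(\pm(a_1,b_1),\pm(a_2,b_2))$ exactly two, differing by a simultaneous sign flip, which is precisely the relation $\sim_{eq}$. Hence the corresponding element $[\eta]_{eq}\in SL(2,\ZZ)/\sim_{eq}$ is well defined and unique.

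The main obstacle is conceptual rather than computational: one must keep track of three different sign ambiguities (lift of each $\eta(F_i)$, orientation of the determinant, and equivalence under $\sim_{eq}$) and verify that they combine exactly to give the dichotomy $\eta^\prime$ versus $\eta^{\prime\prime}$, with the residual $SL(2,\ZZ)$-datum well defined modulo $\sim_{eq}$. Once the bookkeeping is done, the statement follows immediately from Lemma \ref{clema2} and Remark \ref{equiclas}.
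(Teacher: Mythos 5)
Your proof proposal follows essentially the same route the paper takes: it reads off the characteristic function over a triangle, normalizes the first two characteristic vectors to lie in $SL(2,\ZZ)$ modulo simultaneous sign flip, observes the third vector is forced to be one of two choices, and invokes Lemma \ref{clema2}/Remark \ref{equiclas} to turn this bookkeeping into the classification. You supply slightly more detail than the paper at two points — the reduction to the triangle orbit space via Remark \ref{clasq} and the determinant argument pinning down $\eta(F_3)$ — but the underlying argument is the same.
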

Note that the natural $\TT^2$-actions on $\CP^2_{\eta^{\prime}}$ and $\CP^2_{\eta^{\prime \prime}}$ are same.
Consider the linear map $L_{\eta} : \ZZ^2 \to \ZZ^2$, defined by $L_{\eta}(1,0)= (a_1, b_1), L_{\eta}(0, 1) = (a_2, b_2)$.
The map $L_{\eta}$ induces orientation preserving homeomorphisms $\CP^2_s \to \CP^2_{\eta^{\prime}}$ and
$\bar{\CP^2}_s \to \CP^2_{\eta^{\prime \prime}}$. Thus,
\begin{lemma}\label{orit2}
The oriented $\TT^2$-cobordism class of a $\TT^2$-action on $\CP^2$ is $[\CP^2_{\eta^{\prime}}]$
for a unique $[\eta]_{eq} \in SL(2, \ZZ)/ \sim_{eq}$.
\end{lemma}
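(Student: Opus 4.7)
The plan is to deduce this lemma from the preceding classification of $\TT^2$-actions on $\CP^2$ together with the sign identity $[\CP^2_{\eta^{\prime}}]=-[\CP^2_{\eta^{\prime\prime}}]$ already established via the linear isomorphism $L_{\eta}$.

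First, I would apply the preceding classification lemma to the given $\TT^2$-action on $\CP^2$. This produces a unique $[\eta]_{eq}\in SL(2,\ZZ)/\sim_{eq}$ such that the action is equivariantly homeomorphic to either $\CP^2_{\eta^{\prime}}$ or $\CP^2_{\eta^{\prime\prime}}$. In the first subcase, the oriented cobordism class is visibly $[\CP^2_{\eta^{\prime}}]$, and the conclusion of the lemma holds with this $[\eta]_{eq}$.

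In the second subcase the oriented cobordism class equals $[\CP^2_{\eta^{\prime\prime}}]=-[\CP^2_{\eta^{\prime}}]$ by the identity above. To rewrite this in the desired form $[\CP^2_{\tilde{\eta}^{\prime}}]$, I would pre-compose the orientation-reversing equivariant homeomorphism $\bar{\CP^2}_s\to\CP^2_{\eta^{\prime}}$ coming from $L_{\eta}$ with a further linear map of determinant $-1$ followed by a reflection of the triangle $Q$ swapping two of its facets. The reflection reverses orientation on $Q$ (and therefore on the corresponding quasitoric manifold), and when combined with the determinant $-1$ twist produces a matrix $\tilde{\eta}\in SL(2,\ZZ)$ together with an orientation-preserving equivariant homeomorphism onto $\CP^2_{\tilde{\eta}^{\prime}}$. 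Uniqueness of $[\tilde{\eta}]_{eq}$ then follows by a second application of the preceding classification lemma, now to the action on the target $\CP^2_{\tilde{\eta}^{\prime}}$.

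The main obstacle I anticipate is the orientation bookkeeping in the second subcase: I must verify that the composite of the orientation-reversing $L$-map and the orientation-reversing reflection of $Q$ does indeed yield an orientation-preserving equivariant homeomorphism landing in $\CP^2_{\tilde{\eta}^{\prime}}$, rather than accidentally in $\CP^2_{\tilde{\eta}^{\prime\prime}}$, and that the resulting $\tilde{\eta}$ lies in $SL(2,\ZZ)$ rather than in $GL(2,\ZZ)\setminus SL(2,\ZZ)$. This is essentially a sign-checking calculation modeled on the derivation of $[\CP^2_{\eta^{\prime}}]=-[\CP^2_{\eta^{\prime\prime}}]$ itself, once the reflection of $Q$ has been chosen so as to swap the pair of facets carrying $\eta(F_1)$ and $\eta(F_2)$.
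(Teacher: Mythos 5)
Your proposal contains a genuine gap in the second subcase, and the approach there would in fact contradict the paper's final theorem.

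In the second subcase you attempt to produce a matrix $\tilde{\eta}\in SL(2,\ZZ)$, with $[\tilde{\eta}]_{eq}\neq[\eta]_{eq}$, and an orientation-preserving $\TT^2$-equivariant homeomorphism $\CP^2_{\eta^{\prime\prime}}\to\CP^2_{\tilde{\eta}^{\prime}}$, so that $[\CP^2_{\eta^{\prime\prime}}]=[\CP^2_{\tilde{\eta}^{\prime}}]$. Such a $\tilde{\eta}$ cannot exist: combined with the identity $[\CP^2_{\eta^{\prime}}]=-[\CP^2_{\eta^{\prime\prime}}]$ it would give $[\CP^2_{\eta^{\prime}}]+[\CP^2_{\tilde{\eta}^{\prime}}]=0$ (or, if $[\tilde{\eta}]_{eq}=[\eta]_{eq}$, $2[\CP^2_{\eta^{\prime}}]=0$), contradicting the freeness and the infinite-order statement used to deduce Theorem \ref{coborthm}. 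The concrete mechanism of failure is in the ``determinant $-1$ twist'': the cobordism relation requires an honest $\TT^2$-equivariant homeomorphism, i.e.\ $\delta=\mathrm{id}$ in Definition \ref{defdel}. Your composite map is $\delta$-equivariant for the nontrivial, orientation-reversing automorphism $\delta$ of $\TT^2$ induced by the determinant $-1$ matrix (composed with $L_{\eta}$), and reflecting the triangle $Q$ cannot untwist this, since a reflection of $Q$ only permutes facets and does not act on $\TT^2$. So the homeomorphism you build does not identify the $\TT^2$-actions and does not say anything about $\TT^2$-cobordism classes.

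The paper's (implicit) argument instead shows the second subcase simply does not occur. Since the natural $\TT^2$-actions on $\CP^2_{\eta^{\prime}}$ and $\CP^2_{\eta^{\prime\prime}}$ coincide, a $\TT^2$-action on $\CP^2$ determines the pair $\{\CP^2_{\eta^{\prime}},\CP^2_{\eta^{\prime\prime}}\}$ for a unique $[\eta]_{eq}$; the two differ only by orientation. The $L_{\eta}$ argument identifies $\CP^2_{\eta^{\prime}}$ with $\CP^2_s$ and $\CP^2_{\eta^{\prime\prime}}$ with $\bar{\CP^2}_s$ via orientation-preserving $\delta$-equivariant homeomorphisms, so $\CP^2_{\eta^{\prime}}$ carries the standard complex orientation of $\CP^2$ and $\CP^2_{\eta^{\prime\prime}}$ the reversed one. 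A ``$\TT^2$-action on $\CP^2$'' in the lemma carries the standard orientation, hence is the oriented $\TT^2$-manifold $\CP^2_{\eta^{\prime}}$, and its class is $[\CP^2_{\eta^{\prime}}]$ with $[\eta]_{eq}$ unique by the preceding classification lemma. You should replace your case split with this orientation pinning-down rather than trying to relabel $\eta^{\prime\prime}$ as some other $\tilde{\eta}^{\prime}$.
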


Since the order of oriented cobordism class of $\CP^2$ is infinite, we get the following theorem.
\begin{theorem}\label{coborthm}
The oriented torus cobordism group $CG_2$ is an infinite abelian group with a set of generators
$\{ [\CP^2_{\eta^{\prime}}] ~:~ [\eta]_{eq} \in SL(2, \ZZ)/ \sim_{eq} \}$.
\end{theorem}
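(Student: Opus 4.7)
The plan is to obtain Theorem \ref{coborthm} as a clean combination of the two lemmas just established together with the classical fact that $[\CP^2]$ has infinite order in the oriented cobordism ring $\Omega_4^{SO}$ (detected, say, by the signature or by $\int_{\CP^2} c_1^2 = 9$).

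First I would establish that the proposed set spans $CG_2$. By Lemma \ref{coborlm}, every class in $CG_2$ is an integer linear combination of $\TT^2$-cobordism classes of copies of $\CP^2$ equipped with various $\TT^2$-actions. Lemma \ref{orit2} then identifies each such class with a unique $[\CP^2_{\eta^{\prime}}]$ indexed by $[\eta]_{eq} \in SL(2,\ZZ)/\sim_{eq}$, noting that the $\eta''$-variant reduces to $-[\CP^2_{\eta^{\prime}}]$ via the orientation-reversing homeomorphism induced by $L_{\eta}$. This directly yields that the indexed family generates $CG_2$.

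For linear independence, I would use the forgetful homomorphism $\Phi : CG_2 \to \Omega_4^{SO}$ that discards the torus action, combined with the uniqueness clause of Lemma \ref{orit2}. Each basis element $[\CP^2_{\eta^{\prime}}]$ maps under $\Phi$ to the underlying oriented cobordism class $[\CP^2]$, which has infinite order. This already forces each individual $[\CP^2_{\eta^{\prime}}]$ to have infinite order in $CG_2$. To upgrade this to joint independence of the full family, I would appeal to equivariant fixed-point data: at each of the three $\TT^2$-fixed points of $\CP^2_{\eta^{\prime}}$ the tangential representation is determined by the characteristic vectors meeting at the corresponding vertex of the triangle, and these representations are distinct for distinct $[\eta]_{eq}$ by construction of the $\sim_{eq}$ relation. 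Equivariant cobordism preserves (a suitable sum of) these local data, so a nontrivial relation $\sum n_i [\CP^2_{\eta_i^{\prime}}] = 0$ would have to be witnessed by a cobordism whose boundary fixed-point contributions cancel $\TT^2$-representation by $\TT^2$-representation; combined with $\Phi$ sending everything to a multiple of $[\CP^2]$, this forces each $n_i = 0$.

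The main obstacle is precisely this last independence step: the paper phrases the theorem as a "consequence" of the two lemmas plus the infinite order of $[\CP^2]$, but making independence rigorous requires more than the plain forgetful map, since all the $[\CP^2_{\eta^{\prime}}]$ share the same image in $\Omega_4^{SO}$. The cleanest route is to invoke the equivariant localization perspective (or, equivalently, the characteristic-number formalism for torus manifolds) to separate the contributions of different isotropy representations; the rest of the argument is then essentially a bookkeeping exercise on the generating set produced by Lemmas \ref{coborlm} and \ref{orit2}.
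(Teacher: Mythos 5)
Your analysis is essentially right, and you have put your finger on the genuine soft spot. The paper's entire justification for the theorem is a single sentence: ``Since the order of oriented cobordism class of $\CP^2$ is infinite, the following theorem is a consequence of Lemma \ref{coborlm} and \ref{orit2}.'' As you observe, Lemma \ref{coborlm} gives the spanning statement and Lemma \ref{orit2} says the generators $[\CP^2_{\eta'}]$ are pairwise distinct, but infinite order of $[\CP^2]$ in $\Omega^{SO}_4$ plus pairwise distinctness does not yield joint $\ZZ$-linear independence of an infinite family, precisely because the forgetful map sends all of the $[\CP^2_{\eta'}]$ to the same class $[\CP^2]$. Even the uniqueness claim in Lemma \ref{orit2} is stated without an argument that non-equivariantly-homeomorphic actions remain non-cobordant. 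Your proposed repair --- separating the generators by the tangential $\TT^2$-representations at the isolated fixed points, i.e.\ by localization/equivariant characteristic-number data in the spirit of tom Dieck or Guillemin--Ginzburg--Karshon --- is the natural and, I believe, correct way to close the gap; it is a genuinely different and more rigorous route than what the paper records, which stops at the one-line ``consequence'' statement. If you carry this out, you should check carefully that the multiset of tangential weight pairs at the three fixed vertices, taken with orientation signs, does distinguish the classes $[\eta]_{eq}$, since this is where the actual injectivity of the localization map enters.
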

We do not know whether these are the free generators:
\begin{qus}
 Describe all the relations among the generators given in Theorem \ref{coborthm}.
\end{qus}

We discuss the actions of $\TT^n$ on $(2n+1)$-dimensional manifolds (possibly with boundary) where the actions are
similar to the locally standard actions. We again call these actions $locally ~ standard ~ actions$. We discuss
some properties of these actions explicitly in our next article 'Odd dimensional torus manifolds'. Let $\rho_s$
be the standard action of $\TT^n$ on $\CC^n$. Consider the action $\rho$ of $\TT^n$ on $\CC^n \times \RR$ defined
by $\rho(t, (z, r))= (\rho_s(t, z) ,r)$.
\begin{defn}\label{lc}
A smooth action of $\TT^n$ on a $(2n+1)$-dimensional smooth manifold (possibly with boundary) $W$ is said to be locally
standard if every point $a \in W $ has a $\TT^n$-stable open neighborhood
$W_a$ and a diffeomorphism $\xi_a : W_a \to V_a$, where $V_a$ is a $\TT^n$-stable
open subset of $\CC^n \times \RR_{\geq 0}$ under the action $\rho$, and an isomorphism $\delta_a : \TT^n \to \TT^n$ such that
\ $\xi_a (t\cdot x) = \rho(\delta_a (t), \xi_a(x))$ for all $(t,x) \in \TT^n \times W_a$.
\end{defn}
\begin{example}
 Consider $S^{2n+1} =\{(z_1, \ldots, z_{n+1}) \in \CC^{n+1} : \Sigma_i |z_i|^2 = 1\}$, the torus $\TT^n$ acts on $S^{2n+1}$ by
$(t_1, \ldots, t_n) \cdot (z_1, \ldots, z_n, z_{n+1}) =(t_1z_1, \ldots, t_nz_n, z_{n+1})$. This action is a locally standard
action in the sense of Definition \ref{lc}. When $n=1$, the orbit space of this action is a closed $2$-dimensional disk.
\end{example}
\begin{remark}
 If $W$ is a $(2n+1)$-dimensional smooth manifold with boundary with a locally standard action of $\TT^n$, the fixed point set
$W^{\TT^{n}}$ is a disjoint union of some circles and closed intervals.
\end{remark}


{\bf Acknowledgement.} I would like to thank Prof. Mikiya Masuda and my supervisor Prof. Mainak Poddar for helpful suggestions
and stimulating discussions. The author is thankful to the anonymous referee
for helpful suggestions. I thank I.S.I. Kolkata for supporting my research fellowship. This work was partially supported
by the National Research Foundation of Korea (NRF) grant funded by Korea government (MEST) (No. 2012-0000795).


\renewcommand{\refname}{References}

\vspace{1cm}

\vfill

\end{document}